\documentclass[a4paper,11pt]{article}

\usepackage[english]{babel}
\usepackage[a4paper]{geometry}
\usepackage{enumerate}
\usepackage{amsmath}
\usepackage{amsthm}
\usepackage{amssymb}
\usepackage{hyperref}
\usepackage{amssymb}
\usepackage{color}
\usepackage[utf8]{inputenc}
\usepackage{graphicx}
\usepackage{tikz}
\usepackage{tikz-cd}
\usepackage{verbatim} 
\usepackage{mathtools} 
\usepackage{mathabx} 
\usepackage{hyperref}
\usepackage{comment}
\usetikzlibrary{decorations.markings}
\usepackage{bbm}
\usepackage{xfrac} 
\usepackage{microtype} 

	\newtheorem{theorem}{Theorem}[section]  
	\newtheorem{cor}[theorem]{Corollary}
	
	\newtheorem{lemma}[theorem]{Lemma}

	\numberwithin{equation}{section}
    \theoremstyle{definition}
    \newtheorem{defn}{Definition}
    \newtheorem{rem}[theorem]{Remark}
	
	\newcommand{\mO}{\mathcal{O}}  
    \newcommand{\OS}{\mathcal{O}_S}  
    
    \newcommand{\Div}{\text{Div}}
    \newcommand{\supp}{\text{supp}}
    \newcommand{\te}{W} 
    \newcommand{\Mod}[1]{\ (\mathrm{mod}\ #1)}
    \newcommand{\disc}{\text{Disc}}

    \newcommand{\Mat}{\text{Mat}}
    \newcommand{\LD}{\mathcal{L}(D)}
    \newcommand{\inv}{\text{inv}}
    
\begin{document}
\title{Local to global principle for higher moments over function fields}

\author{Andy Hsiao$^1$, Junhong Ma Blackmer$^1$, Severin Schraven$^1$, Ying Qi Wen$^{2}$ \\
\\
$^1$ Department of Mathematics, University of British Columbia, \\
Vancouver, BC V6T 1Z2, Canada\\\\
$^2$ Department of Electrical and Computer Engineering, University of British Columbia, \\
Vancouver, BC V6T 1Z4, Canada}

\date{}
 
	
	
	\maketitle
	\begin{abstract}
	We establish a local to global principle for higher moments over holomorphy rings of global function fields and use it to compute the higher moments of rectangular unimodular matrices and Eisenstein polynomials with coefficients in such rings. 
	\end{abstract}

\section{Introduction}

A classical problem in number theory is to compute the natural density of subsets of the integers. The natural density of a subset $A \subseteq \mathbb{Z}^d$ is given by considering the number of points in $A\cap [-H;H)^d$, normalizing it by $(2H)^d$ - the number of points in the whole ``box" $[-H;H)^d$ - and then taking $H\rightarrow \infty$. A convenient tool to compute such densities in special situations was developed in \cite[Lemma 20]{poonenAnn}. If the set $A$ can be characterised locally in the sense that $A = \bigcap_p (\mathbb{Z}^d \cap U_p)$ for some defining sets $U_p \subseteq Z_p^d$ (which is the case if $A$ is defined by equations modulo prime powers), then under certain
conditions the natural density of $A$ can be expressed in terms of the Haar measures of the $U_p$. One would naturally expect that such a local to global principle for natural density should hold for any global field. Indeed, a similar result was established for number fields in \cite[Proposition 3.2]{bright2016failures}. Finally, the case of global function fields was covered in \cite[Theorem 2.1]{functionfields}. 

One should think of the natural density as a substitute for a finite Haar measure on the integers. It then becomes natural to ask whether one can make sense of the notion of expected value (or any higher moment). If $U_p\subseteq \mathbb{Z}_p^d$ are again the defining sets of our set in $\mathbb{Z}^d$, then expected value is defined as
\begin{align*}
    \lim_{H\rightarrow \infty} \sum_{a\in [-H;H)^d} \frac{\vert \{ p \ : \ p \text{ prime}, \ a\in U_p \} \vert}{(2H)^d}.
\end{align*}
This means, for the elements in the box $[-H;H)^d$ we count in how many defining sets it is contained, average over the total number of points in the box and let the side length of the box go to infinity. This notion was considered in \cite{MMRW} for Eisenstein polynomials. 

One can make a similar definition of expected value over the ring of algebraic integers of number fields or for higher moments. In \cite{MSW}, \cite{MSTW} a local to global principle for higher moments over number fields has been established based on a local to global principle for natural densities \cite{poonenAnn, bright2016failures}. In \cite{bib:HolMS} the notion of natural density for holomorphy rings of global function fields has been introduced (slightly different from the one in \cite{bib:poonen}), where the boxes are replaced by Riemann-Roch spaces (see Section $2$ for precise definitions). For further generalization and overview we directed the interested reader to \cite{demangos2020densities}. In this paper we will prove a local to global principle for higher moments over global function fields and show how to use this tool for some interesting examples.

This paper is organized as follows: In Section $2$ we recall the local to global principle for the natural density in global function fields as introduced in \cite{functionfields}. In Section $3$ we will prove our main theorem, the local to global principle for higher moments over function fields and in Section $4$ we will apply it to some examples (coprime pairs, affine Eisenstein polynomials and rectangular unimodular matrices, all with coefficients in the holomorphy ring of some global function field).

\section{Preliminaries}

In this section we recall the basic definitions and results from \cite{functionfields}. We follow the terminology of \cite{bib:stichtenoth2009algebraic} for function fields and related concepts.

Let $F$ be a global function field, that is a finite extension $F/\mathbb{F}_q(X)$, where $\mathbb{F}_q$ denotes a finite field with $q$ elements.
We denote by $\mO_P$ a valuation ring of $F$, having maximal ideal $P$. The set of all such places of $F$ will be denoted by $\mathbb{P}_F$.
If $\emptyset \neq S \subsetneq \mathbb{P}_F$ and $t\in \mathbb{N}$, we define 
\begin{equation} \label{def:St}
    S_t = \{ P\in S \ : \ \text{deg}(P) \geq t \}.
\end{equation}
Moreover, we write $\mO_S$ to denote the holomorphy ring of $S$
\[\mO_S=\bigcap_{P\in S} \mO_P.\] 
The easiest example of holomorphy ring is $\mathbb{F}_q[x]$ as this consists of the intersection of all the valuation rings of $\mathbb{F}_q(x)$ different from the infinite place (see \cite[Section 1.2]{bib:stichtenoth2009algebraic}).
We denote by $\Div(F)$ the set of divisors of $F$, i.e. the free abelian group on the set $\mathbb{P}_F$. Furthermore, 
for $D=\sum_{P\in \mathbb{P}_F} n_P P\in \Div(F)$ we define $v_P(D)=n_P$. The support  $\supp(D)$ of $D$ is defined as the finite subset of $\mathbb{P}_F$ for which $v_P(D)$ is non-zero. For $D, \widetilde{D} \in \Div(F)$ we write $D\leq \widetilde{D}$ if and only if $v_P(D) \leq v_P(\widetilde{D})$ for all $P\in \mathbb{P}_F$. Note that this defines a partial order on $\Div(F)$.
Moreover, we will write $D\geq 0$ whenever $v_P(D)\geq 0$ for all $P$ in $\mathbb{P}_F$. Let 
\[\Div^+(F)=\{D\in \Div(F)\;|\; D\geq 0\}.\]
For $S\subseteq \mathbb{P}_F$ let $\mathcal{D}_S$ be the subset of divisors of $\Div^+(F)$ having support contained in the complement of $S$.

Let $(a_D)_{D\in \mathcal{D}_S} \subseteq \mathbb{R}$, then we write for $a\in \mathbb{R}$
\begin{align*}
    \lim_{D\in \mathcal{D}_S} a_D =a
\end{align*}
if for every $\varepsilon >0$ there exists $D_\varepsilon \in \mathcal{D}_S$ such that for all $D \in \mathcal{D}_S$ with $D\geq D_\varepsilon$ one has $\vert a-a_D \vert < \varepsilon$. Similarly one defines $\limsup_{D\in \mathcal{D}_S} a_D$ and $\liminf_{D\in \mathcal{D}_S} a_D$.
For further information on Moore--Smith convergence see \cite[Chapter 2]{bib:kelley1955general}. 

We define the upper density for $A \subseteq \mO_S^d$ as follows:
\[\overline \rho_S(A):=\limsup_{D\in\mathcal{D}_{S}}\frac{|A\cap \mathcal{L}(D)^d|}{q^{\ell(D)d}},\]
where $\mathcal{L}(D)$ is the Riemann-Roch space attached to the divisor $D$ and $\ell(D)=\dim_{\mathbb{F}_q}(\mathcal{L}(D))$.
Analogously, one can give a notion of lower density $\underline \rho_S$ by replacing the limes superior by the limit inferior. Whenever these two quantities coincide, we define the density of $A$ as $\overline \rho_S(A)=\underline \rho_S(A)=\rho_S(A)$. 

This definition of density coincides with the classical definition, 
\begin{align*}
    \lim_{d\rightarrow \infty} \frac{\vert A \cap \{ f\in \mathbb{F}_q[x] \ : \ \deg(f) \leq d \} \vert}{ \vert \{ f\in \mathbb{F}_q[x] \ : \ \deg(f) \leq d \} \vert},
\end{align*}
when $\mathcal{O}_S=\mathbb{F}_q[x]$ (that is $F=\mathbb{F}_q(x)$ and $S$ is all the places except the infinite place). This was used in \cite{bib:poonen} to compute the density of square-free, multivariate polynomials with coefficients in $\mathbb{F}_q[x]$.
The reader should notice that in the case of the coordinate ring of the $\mathbb{P}^d$ a similar notion of density was provided in \cite{poonen2004bertini}.

For a valuation ring $\mO_P$ let us denote by $\widehat{\mO}_P$ its completion. As $F$ is a global function field, $\widehat{\mO}_P$ admits a normalized Haar measure, which we denote by $\mu_P$. By abuse of notation we will denote the product measure on $\widehat{\mathcal{O}}_P$ also by $\mu_P$. 
For $U\subseteq \widehat{\mO}_P^n$ we denote by $\partial U$ the boundary of $U$ with respect to the $P$-adic metric.

In \cite{functionfields} the following local to global principle for densities over global function fields was shown (this is an extension of the work of \cite{bib:poonen}).

\begin{theorem}[\text{\cite[Theorem 2.1]{functionfields}}]\label{thm:densityFF}
Let $d$ be a positive integer, $S$ be a proper, nonempty subset of places of $F$, $S_t$ as defined in \eqref{def:St} and $\mO_S$ the holomorphy ring of $S$. 
For any $P\in S$, let $U_P\subseteq \widehat{\mO}_P^d$ be a Borel-measurable set such that $\mu_P(\partial U_P)=0$.
Suppose that
\begin{equation}\label{fund_cond_loc_to_glob}
\lim_{t\rightarrow \infty }\overline{\rho}_S(\{a\in \mO_S^d\:|\: a\in U_P \;\text{for some }\; P\in S_t\})=0.
\end{equation}
Let $\pi:\mO_S^d\longrightarrow 2^{S}$ be defined by
$\pi(a)=\{P\in S: a\in U_P\}\in 2^{S}$. Then
\begin{itemize}
\item[(i)] $\displaystyle{\sum_{P\in S} \mu_P(U_P)}$ is 
convergent.
\item[(ii)] Let $\Gamma\subseteq 2^S$. Then $\nu (\Gamma):=\rho_S(\pi^{-1}(\Gamma))$ exists and  $\nu$ defines a measure on $2^{S}$.
\item[(iii)] $\nu$ is concentrated at finite subsets of $S$. In addition, if $T\subseteq S$ is finite we have:
\begin{equation} \label{general product formula}
    \nu(\{T\})=\left(\prod_{P\in T}\mu_P(U_P)\right)\prod_{P\in S\setminus T } (1-\mu_P(U_P)).
\end{equation}
\end{itemize}
\end{theorem}

In the same paper, the following variant of Ekedahl's sieve was proved as a tool to verify assumption \eqref{fund_cond_loc_to_glob}.

\begin{theorem}[\text{\cite[Theorem 2.2]{functionfields}}]\label{condition_verified_polynomials_THEOREM}
Let $F$ be a global function field and $S$ be a proper, nonempty subset of $\mathbb{P}_F$. Let $\mO_S$ be the holomorphy ring of $S$. Let $f,g \in \mO_S[x_1,\dots,x_d]$ be coprime polynomials. Then
\begin{equation}\label{condition_verified_polynomials}
\lim_{t\rightarrow \infty} \overline{\rho}_S\left(\{y\in \mO_S^d: \quad f(y)\equiv g(y)\equiv 0 \mod P\quad  \text{for some}\; P\in S_t\}\right)=0.
\end{equation}
\end{theorem}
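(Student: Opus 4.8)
The plan is to reduce the statement to a counting estimate inside Riemann--Roch spaces and then run the classical Ekedahl sieve argument adapted to the function-field setting. First I would fix the data: since $f,g\in\mO_S[x_1,\dots,x_d]$ are coprime, they generate an ideal of height at least $2$ in $\mO_S[x_1,\dots,x_d]$, so the closed subscheme $Z=\{f=g=0\}$ of $\mathbb{A}^d_{\mO_S}$ has codimension at least $2$ over the generic point. Geometrically, for all but finitely many places $P\in S$ the reduction $Z_P\subseteq \mathbb{A}^d_{\kappa(P)}$ (where $\kappa(P)=\mO_S/P$ is the residue field, of size $q^{\deg P}$) is still of codimension $\geq 2$, hence has at most $C\cdot q^{(d-2)\deg P}$ points for some constant $C$ depending only on $f,g,d$ (a Lang--Weil / Bézout type bound). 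The finitely many bad places contribute a set of density zero already covered by Theorem~\ref{thm:densityFF}-type reasoning, so I may discard them.

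Next I would estimate, for a fixed divisor $D\in\mathcal{D}_S$ and a fixed $P\in S_t$, the number of $a\in\LD^d$ with $f(a)\equiv g(a)\equiv 0\Mod P$. The key point is that reduction modulo $P$ maps $\LD$ onto (a coset structure in) $\kappa(P)$ in a way that is close to equidistributed: provided $\deg P$ is not too large relative to $\deg D$, each residue class in $\kappa(P)$ is hit by roughly $q^{\ell(D)}/q^{\deg P}$ elements of $\LD$, with a controlled error. Combining this with the point count $\lvert Z_P(\kappa(P))\rvert \leq C q^{(d-2)\deg P}$ gives
\[
\bigl\lvert\{a\in\LD^d : f(a)\equiv g(a)\equiv 0\Mod P\}\bigr\rvert \;\leq\; C' \, q^{\ell(D)d}\, q^{-2\deg P}
\]
in the relevant range of $\deg P$, and a cruder bound (e.g. everything, i.e. $q^{\ell(D)d}$, or a trivial one-variable estimate) for $P$ with $\deg P$ large compared to $\deg D$. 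Summing over $P\in S_t$ and using that the number of places of $F$ of degree $m$ is $O(q^m/m)$, the series $\sum_{P\in S_t} q^{-2\deg P}$ is bounded by $\sum_{m\geq t} q^{m}/m \cdot q^{-2m}=\sum_{m\geq t} q^{-m}/m$, which tends to $0$ as $t\to\infty$. Dividing by $q^{\ell(D)d}$ and taking $\limsup_{D\in\mathcal{D}_S}$ then gives an upper density bounded by this tail, uniformly in $D$, and letting $t\to\infty$ yields \eqref{condition_verified_polynomials}.

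The main obstacle I anticipate is the uniformity of the equidistribution of $\LD$ modulo $P$ and the careful treatment of the range where $\deg P$ is comparable to or larger than $\deg D$: unlike the classical case over $\mathbb{Z}$, here the "box" $\LD$ is an $\mathbb{F}_q$-vector space whose reduction mod $P$ need not be surjective or uniform when $\deg P \gtrsim \deg D$, so one must split the sum over $P\in S_t$ at a threshold (say $\deg P \leq \ell(D)/d$ or similar) and bound the high-degree tail by a separate, more elementary argument — for instance, for each coordinate the condition $f(a)\equiv 0\Mod P$ forces $a$ into a proper subset, or one invokes that $a\in\LD^d\mapsto f(a)$ cannot vanish mod $P$ too often because a nonzero element of $\mO_S$ lying in $\LD'$ for a suitable $D'$ has only finitely many prime factors of bounded total degree. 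A clean way to package all of this is to cite or adapt the counting lemmas already established in \cite{functionfields} for the proof of Theorem~\ref{thm:densityFF}, since condition \eqref{fund_cond_loc_to_glob} is exactly of the shape analyzed there; the remaining work is then purely the geometric input that $\{f=g=0\}$ has codimension $\geq 2$, which is where coprimality of $f$ and $g$ enters.
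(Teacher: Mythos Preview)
The paper does not give its own proof of this theorem: it is quoted verbatim from \cite{functionfields} (with the remark that the finiteness assumption on $\mathbb{P}_F\setminus S$ there is unnecessary, and that an alternative proof appears in \cite{P}). So there is no in-paper argument to compare against; your proposal is effectively a reconstruction of the Ekedahl sieve in the function-field setting, and in broad strokes it matches what those references do: codimension $\geq 2$ from coprimality, a Lang--Weil/Schwartz--Zippel bound on $\lvert Z_P(\kappa(P))\rvert$, equidistribution of $\LD$ modulo $P$, and a tail sum $\sum_{P\in S_t} q^{-2\deg P}\to 0$.

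Two points where your sketch is looser than it needs to be. First, the equidistribution step is actually \emph{exact}, not approximate: once $\deg(D)-\deg(P)$ exceeds $2g-2$, Riemann--Roch gives $\ell(D-P)=\ell(D)-\deg(P)$, so the reduction $\LD\to \mO_S/P$ is surjective with all fibres of size exactly $q^{\ell(D)-\deg P}$. This removes any error term in the low-degree range and cleans up the main estimate. Second, your treatment of the high-degree range ($\deg P$ comparable to $\deg D$) is the genuinely delicate part and you leave it as a gesture. The standard way to close it is the one you allude to at the end: for $a\in\LD^d$ one has $f(a)\in\mathcal{L}(\deg(f)\cdot D)$, so if $f(a)\neq 0$ the sum $\sum_{P\in S}\deg(P)\,v_P(f(a))$ is bounded by $\deg(f)\deg(D)$, which caps the number of large places $P$ with $f(a)\in P$; the contribution of $a$ with $f(a)=0$ is handled separately by a dimension count. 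Making this precise (and checking that the threshold can be chosen so the two ranges overlap) is where the actual work lies, and your proposal would need to spell it out rather than defer to ``counting lemmas already established in \cite{functionfields}''.
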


\begin{rem}
    Note that in \cite{functionfields} the theorem is only stated for sets $S$ having finite complement, but the assumption is not needed in the proof. An alternative proof was sketched in \cite[Theorem 8.1]{bib:poonen}.
\end{rem}

The next Corollary follows from Theorem \ref{thm:densityFF}. It relates the density of the defining sets to their Haar measures and will play a crucial role in the proof of our main theorem.
\begin{cor}\label{denofUW}
    Let $F$ be a global function field with full field of constant equal to $\mathbb{F}_q$. Let $d$ be a positive integer, $S$ be a proper, nonempty subset of the places of $F$ and $\mathcal{O}_S$ the holomorphy ring of $S$. Let $P_1, . . . , P_n \in S$ be
distinct and for $j = 1, . . . , n$, let $U_{P_j} \subseteq \widehat{\mathcal{O}}_{P_j}^d$ be a Borel-measurable set with $\mu_{P_j} (\partial U_{P_j} ) = 0$. Then
	\begin{equation} \label{product formula}
		\rho_S\left( \bigcap_{j=1}^n \left( U_{P_j} \cap \mO_S^d \right)\right) = \prod_{j=1}^n \mu_{P_j}(U_{P_j}).
	\end{equation}
\end{cor} 

\begin{proof}
    Define $V_P=U_P$ for $P\in \{P_1, \dots, P_n\}$ and $V_P=\emptyset$ otherwise. Then $(V_P)_{P\in S}$ satisfies the assumption of Theorem \ref{thm:densityFF} and \ref{product formula} corresponds to \ref{general product formula} with $T=\{P_1, \dots, P_n\}$.
\end{proof}

\section{Higher moments}

For a family $(U_P)_{P \in S}$, we would like to define the expected value of the number of sets $U_P$ such that a random element $a \in \mO_S^d$ lies in and also the corresponding higher moments. 

\begin{defn} \label{definition moment}
	Let $F$ be a global function field with full field of constant equal to $\mathbb{F}_q$. Let $n$ and $d$ be positive integers, $\emptyset \neq S \subsetneq \mathbb{P}_F$. Suppose $U_P \subseteq \widehat{\mathcal{O}}_P$.
    Then we define \emph{the $n$-th moment of the system $(U_P)_{P \in S}$} to be 
	\begin{equation} \label{def:mun}
		\mu_n  =  \lim\limits_{D \in \mathcal{D}_S} \displaystyle{\frac{\sum\limits_{a \in \mathcal{L}(D)^d} \lvert \{ P \in S \mid a \in U_{P} \}\rvert^n} {q^{\ell(D)d}}},
	\end{equation} if it exists. 
        We call $\mu_1$ the expected value of the system $(U_P)_{P \in S}$.
\end{defn}

Our main theorem gives a way to compute higher moments for certain systems. 

\begin{theorem}\label{Thm:highermoments}
		Let $d$ and $n$ be positive integers. Let $F$ be a global function field with full field of constant equal to $\mathbb{F}_q$, $S$ be a proper, nonempty subset of $\mathbb{P}_F$, and $\mO_S$ be the holomorphy ring of $S$. For each  $P \in S$, let $U_P \subseteq \widehat{\mO}_P^d$ be a measurable set such that $\mu_P(\partial(U_P)) =0$. Let $S_t \coloneqq \{ P \in S \mid \deg{(P)} \geq t\}$.
		If 
		\begin{equation}\label{densitycond}
		    \lim_{t \to \infty} \overline{\rho}_S \left( \{ a \in \mO_S^d \mid a\in U_P \text{ for some } P \in S_t \}\right)=0
		\end{equation}
		is satisfied, and for some $\alpha \in [0, \infty)$ there exist absolute constants $c', c, \Tilde{c} \in \mathbb{Z}$, such that for all $D \in \mathcal{D}_S$ with $\deg(D) \geq \tilde{c}$ and for all $a \in \mathcal{L}(D)^d$ one has that
		\begin{equation}\label{newcond}
		   \left\vert \left\{ P\in S  \mid \deg(P) > c'\deg(D)^\alpha,  a \in U_P \cap \mathcal{L}(D)^d  \right\} \right\vert <c
		\end{equation} and  that there exists a sequence  $(v_P)_{P\in S}$, such that for all $m\in \{ 1, \dots, n\}$ and all $\deg(P_1), \dots, \deg(P_m) \leq c'\deg(D)^\alpha$ with $P_j\in S$ pairwise distinct one has that
		\begin{eqnarray}
		\left\vert \bigcap_{j=1}^{m} U_{P_j} \cap\mathcal{L}(D)^d  \right\vert & \leq q^{\ell(D)d} \prod\limits_{j=1}^{m} v_{P_j}, 
		\label{newcond2} \\
		\sum_{P \in S} v_P & \text{converges}, \label{newcond3}
		\end{eqnarray}
		then it follows that
		\begin{align*}
		\mu_{n}  &=  \lim\limits_{D \in \mathcal{D}_S} \displaystyle{\frac{\sum\limits_{a \in \mathcal{L}(D)^d 
				  } \lvert \{ P \in S \mid a \in U_{P} \}\rvert^n} {q^{\ell(D)d}}}
		\end{align*}
		exists and $\mu_{n}<\infty$. 
            For $l\in \mathbb{N}_{\geq 1}$ we denote by $\begin{Bmatrix} n \\ l \end{Bmatrix}$ the number of partitions of the set $\{1, \dots, n\}$ with exactly $l$ subsets.
            Then we have the formula
		\begin{equation}\label{mean}
		      \mu_{n} = \sum_{l=1}^n \begin{Bmatrix}
		          n \\ l
		      \end{Bmatrix} \sum_{\substack{P_1, \dots, P_{l}\in S  \\ \forall i<j \in \{1, \dots, l\}, \ P_i \neq P_j}} \prod\limits_{m=1}^{l} \mu_{P_m}(U_{P_m}).
		\end{equation}
	\end{theorem}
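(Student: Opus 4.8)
The plan is to reduce the $n$-th moment to a sum over tuples of places by expanding the power $|\{P : a \in U_P\}|^n$ combinatorially, and then to control the interchange of the limit $D \in \mathcal{D}_S$ with the resulting sums using the two-scale hypotheses \eqref{newcond}--\eqref{newcond3}. Concretely, for a fixed $a \in \mathcal{L}(D)_I^d$ write $N(a) = |\{P \in S : a \in U_P\}|$, which is finite since $a \notin I$. Then
\[
N(a)^n \;=\; \sum_{(P_1,\dots,P_n)\in S^n} \prod_{k=1}^n \mathbbm{1}_{U_{P_k}}(a) \;=\; \sum_{l=1}^n \begin{Bmatrix} n \\ l \end{Bmatrix} \sum_{\substack{P_1,\dots,P_l \in S \\ P_i \neq P_j \ \forall i<j}} \prod_{m=1}^l \mathbbm{1}_{U_{P_m}}(a),
\]
where the second equality is the standard identity grouping the index tuples $(P_1,\dots,P_n)$ according to the set partition of $\{1,\dots,n\}$ recording which coordinates are equal (each block contributes one distinct place, and there are $\begin{Bmatrix} n \\ l \end{Bmatrix}$ partitions into $l$ blocks). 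Summing over $a \in \mathcal{L}(D)_I^d$ and dividing by $q^{\ell(D)d}$ gives
\[
\frac{\sum_{a \in \mathcal{L}(D)_I^d} N(a)^n}{q^{\ell(D)d}} \;=\; \sum_{l=1}^n \begin{Bmatrix} n \\ l \end{Bmatrix} \sum_{\substack{P_1,\dots,P_l \in S \\ P_i \neq P_j \ \forall i<j}} \frac{\bigl| \bigcap_{m=1}^l U_{P_m} \cap \mathcal{L}(D)_I^d \bigr|}{q^{\ell(D)d}}.
\]
So it suffices to show that for each fixed $l \in \{1,\dots,n\}$,
\[
\lim_{D \in \mathcal{D}_S} \sum_{\substack{P_1,\dots,P_l \in S \\ P_i \neq P_j \ \forall i<j}} \frac{\bigl| \bigcap_{m=1}^l U_{P_m} \cap \mathcal{L}(D)_I^d \bigr|}{q^{\ell(D)d}} \;=\; \sum_{\substack{P_1,\dots,P_l \in S \\ P_i \neq P_j \ \forall i<j}} \prod_{m=1}^l s_{P_m},
\]
since plugging this into the displayed identity yields exactly \eqref{mean} and simultaneously proves existence and finiteness of $\mu_n$.

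For the inner limit I would split the index set of $l$-tuples of pairwise distinct places into a ``low-degree'' part $L_D = \{(P_1,\dots,P_l) : \deg(P_j) \leq c'\deg(D)^\alpha \ \forall j\}$ and its complement $H_D$. On the high-degree part $H_D$, in any such tuple at least one place has $\deg(P_j) > c'\deg(D)^\alpha$, and then \eqref{newcond} bounds the number of places $P$ with $\deg(P) > c'\deg(D)^\alpha$ and $a \in U_P \cap \mathcal{L}(D)_I^d$ by the absolute constant $c$ uniformly in $a$; combined with \eqref{newcond2}--\eqref{newcond3} applied to the remaining (lower-degree) coordinates and the convergence of $\sum_P v_P$, one gets that the $H_D$-contribution is bounded by $c \cdot l \cdot \bigl(\sum_{P} v_P\bigr)^{l-1} \cdot \sup_{\deg(P) > c'\deg(D)^\alpha} \bigl|U_P \cap \mathcal{L}(D)_I^d\bigr| / q^{\ell(D)d}$, or more carefully by a tail of the convergent series $\sum_P v_P$ that goes to $0$ as $\deg(D) \to \infty$ — this is the step that forces the high-degree contribution to vanish in the limit. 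On the low-degree part $L_D$, for each fixed tuple $(P_1,\dots,P_l)$ of pairwise distinct places, Corollary \ref{denofUW} gives $\rho_S\bigl(\bigcap_{m=1}^l (U_{P_m} \cap \mO_S^d)\bigr) = \prod_{m=1}^l s_{P_m}$, and since removing the density-zero set $I$ does not change densities, $|\bigcap_{m=1}^l U_{P_m} \cap \mathcal{L}(D)_I^d| / q^{\ell(D)d} \to \prod_{m=1}^l s_{P_m}$ as $D \in \mathcal{D}_S$. The dominated-convergence-type passage to the limit over the (growing but, term-by-term, eventually stabilizing) family $L_D$ is justified by the uniform bound \eqref{newcond2}: each term is dominated by $\prod_{m=1}^l v_{P_m}$, which is summable over all pairwise-distinct $l$-tuples by \eqref{newcond3} (the sum over all of $S^l$ is $\bigl(\sum_P v_P\bigr)^l < \infty$). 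Hence by the dominated convergence theorem for series the $L_D$-sum converges to $\sum \prod_{m=1}^l s_{P_m}$, which is itself finite since $s_P \leq v_P$ for all but finitely many $P$ (as $\mu_P(U_P) = s_P \leq v_P$ follows from \eqref{newcond2} with $m=1$ in the limit, via Theorem \ref{thm:densityFF}, up to the finitely many $P$ with $\deg(P) \leq c'\deg(D_0)^\alpha$), so $\sum_P s_P \leq$ const $+ \sum_P v_P < \infty$.

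The main obstacle I anticipate is the uniformity in the high-degree estimate: one needs \eqref{newcond} and \eqref{newcond2} to interact correctly so that, in a high-degree tuple, bounding one coordinate by the constant $c$ (from \eqref{newcond}) while bounding the others by $v_{P_j}$ (from \eqref{newcond2}) produces a bound that genuinely tends to $0$ as $\deg(D) \to \infty$ rather than merely staying bounded — this requires care because the set of allowed ``low'' places $\{P : \deg(P) \leq c'\deg(D)^\alpha\}$ grows with $D$, so the naive bound $c \cdot (\sum_P v_P)^{l-1}$ is only $O(1)$, not $o(1)$. The resolution is to be more precise: sum over which coordinate is the high one and use that the high coordinate ranges over $\{P : \deg(P) > c'\deg(D)^\alpha\}$, whose $v_P$-tail $\sum_{\deg(P) > c'\deg(D)^\alpha} v_P \to 0$; but applying \eqref{newcond2} to the high coordinate requires that place to still satisfy $\deg(P) \leq c'\deg(D)^\alpha$ — it does not — so instead one must bound $|\bigcap_m U_{P_m} \cap \mathcal{L}(D)_I^d|$ on $H_D$ by applying \eqref{newcond2} only to the sub-tuple of low-degree coordinates and using $|\{P : \deg(P) > c'\deg(D)^\alpha, a \in U_P\}| < c$ from \eqref{newcond} to handle the high ones after summing over $a$. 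Getting this bookkeeping right — interchanging the sum over $a$ and the sum over high places, and matching the hypotheses to the right coordinates — is the crux of the argument; everything else is the combinatorial identity above plus routine dominated convergence.
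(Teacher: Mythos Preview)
Your combinatorial reduction (the Stirling-number expansion of $N(a)^n$ into sums over $l$-tuples of pairwise distinct places, followed by Corollary~\ref{denofUW} termwise) is exactly what the paper does, and your diagnosis that the whole difficulty lies in justifying the interchange of $\lim_D$ with the infinite place-sum is correct. The gap is in how you handle that interchange.

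You split at the $D$-dependent threshold $c'\deg(D)^\alpha$ into $L_D$ and $H_D$. For $H_D$ your plan --- bound the high coordinates by the constant $c$ from \eqref{newcond} after summing over $a$, and the low coordinates by \eqref{newcond2} --- yields, as you yourself note, only an $O(1)$ bound of the shape $c^{|J|}\bigl(\sum_P v_P\bigr)^{l-|J|}$, not $o(1)$. To extract decay you must retain the constraint $a\in\bigcup_{\deg(P)>c'\deg(D)^\alpha}U_P$ and then separate this small set from the low-degree factor $N_{\text{low}}(a)^{l-|J|}$; that separation requires H\"older's inequality (or an equivalent), which is absent from your plan. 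Second, your decomposition fails outright when $\alpha=0$: then $c'\deg(D)^\alpha=c'$ is a fixed constant, $L_D$ covers only the finitely many places of degree $\le c'$, your dominated-convergence step gives $\sum_{L_D}\to\sum_{\deg(P_j)\le c'}\prod s_{P_j}$ rather than the full sum, and consequently $H_D$ cannot tend to $0$.

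The paper sidesteps both issues by introducing an auxiliary \emph{fixed} cutoff $M$, independent of $D$. It writes the moment as $\sum_{j=0}^n\binom{n}{j}R_j(M,D)$, where $R_j$ carries $j$ factors with $\deg(P)<M$ and $n-j$ with $\deg(P)\ge M$. The main term $R_n(M,D)$ is a \emph{finite} sum in the places, so $\lim_D$ commutes with it trivially (no dominated convergence needed), and then $M\to\infty$ gives \eqref{mean} by monotone convergence of the finite partial sums. The error terms are handled by H\"older, $R_j\le\Theta_n(M,D)^{(n-j)/n}R_n(M,D)^{j/n}$, where $\Theta_n=R_0$ is bounded via a \emph{further} split of $S_M$ into ``medium'' places $M<\deg(P)\le c'\deg(D)^\alpha$ (controlled by $v_P$) and ``very high'' places $\deg(P)>c'\deg(D)^\alpha$ (controlled by $c$); after $\limsup_D$ one is left with $c^n\,\overline\rho_S\bigl(\bigcup_{P\in S_M}U_P\bigr)$ plus powers of $\sum_{P\in S_M}v_P$, both of which vanish as $M\to\infty$ by \eqref{densitycond} and \eqref{newcond3}. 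The independent parameter $M$ is precisely what lets those two hypotheses (each phrased for a fixed threshold tending to infinity) be invoked cleanly, and is what makes the proof uniform over $\alpha\in[0,\infty)$.
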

        \begin{rem}
        \begin{enumerate}
            \item Let $(U_P)_{P\in S}, (\widetilde{U}_P)_{P\in S}$ be systems satisfying the assumptions of Theorem \ref{Thm:highermoments} for a moment $r$. If $\mu_P(U_P) =\mu_P(\widetilde{U}_P)$ and we have another system $(V_P)_{P\in S}$ such that $U_P \subseteq V_P \subseteq \widetilde{U}_P$, then the $r$th moment of $(V_P)_{P\in S}$ exists too and is given by \eqref{mean}.
            \item The coefficient $\begin{Bmatrix} n\\ l \end{Bmatrix}$ in \eqref{mean} is the Stirling number of the second kind and can be computed as follows
            \begin{align*}
                \begin{Bmatrix}
                    n \\ l
                \end{Bmatrix} = \frac{1}{l!} \sum_{k=0}^l (-1)^k \binom{l}{k} (l-k)^n.
            \end{align*}

            \item One could weaken the assumptions \eqref{newcond}, \eqref{newcond2} and \eqref{newcond3}. Namely, it would be enough to assume that for every $D\in \mathcal{D}_S$ there exists $\widetilde{D}\in \mathcal{D}_S$ with $\widetilde{D}\geq D$ such that for all $D'\in \mathcal{D}_S$ with $D'\geq \widetilde{D}$ the conditions \eqref{newcond}, \eqref{newcond2} and \eqref{newcond3} hold true. The statement could be proved using the same ideas as in the proof below. 
            \item We could also include the case $\alpha=\infty$, meaning that we could drop assumption \eqref{newcond} and require instead \eqref{newcond2} and \eqref{newcond3} to hold for all places in $S$.
        \end{enumerate}
            
        \end{rem}
	
	\begin{proof}
	    We fix $S$ throughout the proof, and write $\mO$, $\rho$, and $\mathcal{D}$ in place of $\mO_S$, $\rho_S$, and $\mathcal{D}_S$ respectively.
	
		For $a\in \mO^d$ and $P\in \mathbb{P}_F$, we define
		\begin{align*}
		\tau(a,P) = \begin{cases}
		1,& a\in U_P,\\
		0,& \text{else}.
		\end{cases}
		\end{align*}
		For $M\in \mathbb{N},$ we have that
		$$\sum\limits_{a\in \mathcal{L}(D)^d} \frac{\left( \sum\limits_{P \in S} \tau(a,P) \right)^n}{q^{\ell(D)d}}
		=\sum\limits_{j=0}^n \binom{n}{j}  R_j(M,D),$$
		where for all $j \in \{0, \ldots, n\}$, we define
		\begin{align*}
		R_j(M,D) \coloneqq \sum_{a\in \mathcal{L}(D)^d} \frac{\left( \sum\limits_{P\in S_M} \tau(a,P) \right)^{n-j} \left( \sum\limits_{P \in S, \ \deg(P) < M} \tau(a,P)\right)^j}{q^{\ell(D)d}}.
		\end{align*}
		First we show that for all $j\in \{0, \dots, n-1\}$ the terms $R_j(M,D)$ are negligible for $M$ going to infinity. We define
		\begin{align*}
		l_{a,D} \coloneqq \left\vert \left\{ P \in S  \mid \deg(P) > c'\deg(D)^\alpha, a \in U_P \cap \mathcal{L}(D)^d  \right\} \right\vert.
		\end{align*}
		Then by \eqref{newcond} there exists $c, \tilde{c}>0$ such that for all $a\in \mO^d$ and all $D \in \mathcal{D}$ with $\deg(D) \geq \tilde{c}$ holds $l_{a,D} \leq c$. Thus, we get for $M\geq \tilde{c}$ 
		\begin{align*}
	    & \Theta_n(M,D)\coloneqq  q^{-\ell(D)d}\sum\limits_{a\in \mathcal{L}(D)^d} \left( \sum\limits_{P\in S_M} \tau(a,P) \right)^n \\
		= & q^{-\ell(D)d}\sum\limits_{i=0}^n \binom{n}{i} \hspace{-0.75cm}  \sum_{ \substack{a\in \mathcal{L}(D)^d: \\ a \in \bigcup\limits_{P_1, \dots, P_n \in S_M} \bigcap\limits_{j=1}^n U_{P_j}}} \hspace{-0.5cm} \left\vert \left\{ (P_j)_{j=1}^n \in S^n:  
		\begin{matrix}
		M < \deg(P_k) \leq c'\deg(D)^\alpha & \text{ if } 1 \leq k \leq i \\
		c'\deg(D)^\alpha < \deg(P_k) & \text{ if } i < k \leq n
		\end{matrix}
		\right\} \right\vert  \\
		\leq & q^{-\ell(D)d}\sum_{i=0}^n \binom{n}{i} \hspace{-0.75cm} \sum\limits_{\substack{a\in \mathcal{L}(D)^d: \\ a\in  \bigcup\limits_{P_1, \dots, P_n \in S_M} \bigcap\limits_{j=1}^n U_{P_j}}} \hspace{-0.75cm} l_{a,D}^{n-i}  \left\vert \{ (P_j)_{j=1}^i \in S^i : M < \deg(P_j) < c'\deg(D)^\alpha  \} \right\vert\\
		\leq  & q^{-\ell(D)d}c^{n} \left\vert \mathcal{L}(D)^d \cap  \bigcup\limits_{P\in S_M} U_P \right\vert 
		+q^{-\ell(D)d}\sum\limits_{i=1}^n \binom{n}{i} c^{n-i}\hspace{-1.5cm}  \sum\limits_{\substack{(P_1, \dots, P_i)\in S^i \\ M < \deg(P_1), \dots, \deg(P_i) \leq c'\deg(D)^\alpha}} \hspace{-1cm}\vert \mathcal{L}(D)^d \cap \bigcap\limits_{j=1}^i U_{P_j} \vert.
		\end{align*}
		Using  \eqref{newcond2}, we further have that
		\begin{align*}
		\Theta_n(M,D) \leq & c^{n} \frac{\vert \mathcal{L}(D)^d \cap \bigcup\limits_{P \in S_M} U_P \vert}{q^{\ell(D)d}} \\ &
		+ \sum\limits_{i=1}^n \sum_{j=1}^i 2^i \binom{n}{i} c^{n-i} \sum\limits_{\substack{(P_1, \dots, P_j)\in S^j \\ M < \deg(P_1) , \dots , \deg(P_j) < c'\deg(D)^\alpha}} \left( \prod_{k=1}^{j} v_{P_k} \right)  \\
		\leq  & c^{n} \frac{\vert \mathcal{L}(D)^d \cap \bigcup\limits_{P\in S_M} U_P \vert}{q^{\ell(D)d}} \\ &
		+ \sum\limits_{i=1}^n \sum_{j=1}^i 2^i \binom{n}{i} c^{n-i} \left( \sum_{P \in S_M} v_P \right)^j.
		\end{align*}
		
		This implies that
		\begin{align*}
		\limsup_{D \in \mathcal{D}} \Theta_n(M,D)
		\leq  & c^{n} \overline{\rho}\left(  \mO^d \cap \bigcup\limits_{P\in S_M} U_P \right)  \\ &
		+ \sum\limits_{i=1}^n \sum_{j=1}^i 2^i \binom{n}{i} c^{n-i} \left( \sum_{P\in S_M} v_P \right)^j.
		\end{align*}
		Thus, we get from \eqref{densitycond} and \eqref{newcond3}
		\begin{equation} \label{Sn}
		\lim_{M \rightarrow \infty} \limsup_{D\in \mathcal{D}} \Theta_n(M,D) =0.
		\end{equation}

		Using Hölder's inequality, we get for $j\in \{1, \dots, n-1\}$
		\begin{align*}
		R_j(M,D) \leq \Theta_n(M,D)^{(n-j)/n}  R_n(M,D)^{j/n}.
		\end{align*}
            Hence, if we can show that $\lim\limits_{M \rightarrow \infty} \lim\limits_{D\in\mathcal{D}} R_n(M,D)$ exists, then
            we get for all $j\in \{0, \dots, n-1\}$ 
		\begin{equation*}
		\lim_{M \rightarrow \infty}
		\limsup_{D \in \mathcal{D}}
		R_j(M,D) = 0
		\end{equation*}
		 and thus, $\mu_{n}$ exists as well and we have $$\mu_{n} = \lim\limits_{M \rightarrow \infty} \lim\limits_{D\in\mathcal{D}} R_n(M,D).$$ 
	In order to show that $\lim\limits_{M \rightarrow \infty} \lim\limits_{D\in\mathcal{D}} R_n(M,D)$ exists, we need to evaluate expressions of the form 
        \begin{align*}
            \lim_{D \in \mathcal{D}} \frac{\vert \mathcal{L}(D)^d \cap \bigcap_{j=1}^n U_{P_j}\vert }{ q^{\ell(D)d}}.
        \end{align*}
        We would like to use Corollary \ref{denofUW}, however, this only applies if the places are pairwise distinct. 
	For $l\in \mathbb{N}_{\geq 1}$, we denote by $\begin{Bmatrix} n\\ l\end{Bmatrix}$ the number of partitions of $\{1, \dots, n\}$ which contain exactly $l$ subsets. Note that $\{P_1, \dots, P_n\} = \{Q_1, \dots, Q_l\}$ with $Q_1, \dots , Q_k$ pairwise distinct, if and only if
        \begin{align*}
            \{ 1, \dots, n \} = \bigsqcup_{k=1}^l \{ j\in \{ 1, \dots, n\} \ : \ P_j =Q_k \}.
        \end{align*}    
        Thus, we get
        \begin{align*}
            R_n(M,D) &=  \sum\limits_{\substack{P_1, \dots, P_{\ell(\tau)} \in S \\ \deg(P_1), \dots, \deg(P_{l(\tau)})< M}} \frac{\vert \mathcal{L}(D)^d \cap \bigcap\limits_{j=1}^n U_{P_j} \vert}{q^{\ell(D)d}} \\
            &= \sum_{l=1}^n \begin{Bmatrix} n\\l \end{Bmatrix} \sum\limits_{\substack{P_1, \dots, P_{l} \in S \\ \deg(P_1), \dots, \deg(P_{l})< M  \\ \forall i<j\in \{1, \dots, l(\tau)\}, \ P_i \neq P_j}} \frac{\vert \mathcal{L}(D)^d \cap \bigcap\limits_{j=1}^{l} U_{P_j} \vert}{q^{\ell(D)d}}.
        \end{align*}
        As all the sums are finite, we can pull the limit over the divisors inside of the sums and get with
        Corollary \ref{denofUW}
        \begin{align*}
            \lim_{D \in \mathcal{D}} R_n(M,D) &= \sum_{l=1}^n \begin{Bmatrix} n \\ l \end{Bmatrix} \sum\limits_{\substack{P_1, \dots, P_{\ell(\tau)} \in S \\ \deg(P_1), \dots, \deg(P_{l(\tau)})< M  \\ \forall i<j\in \{1, \dots, l(\tau)\}, \ P_i \neq P_j}} \rho \left( \bigcap\limits_{j=1}^{l(\tau)} U_{P_j} \cap \mO^d \right) \\
            &= \sum_{l=1}^n \begin{Bmatrix} n \\ l \end{Bmatrix} \sum\limits_{\substack{P_1, \dots, P_{l} \in S \\ \deg(P_1), \dots, \deg(P_{l}) < M  \\ \forall i<j\in \{1, \dots, l(\tau)\}, \ P_i \neq P_j}} \prod\limits_{j=1}^{l} \mu_{P_j}(U_{P_j}).
        \end{align*}
	   Taking $M\rightarrow \infty$ yields \eqref{mean}. 
	Using \eqref{newcond3} and the crude estimate $\begin{Bmatrix} n \\ l \end{Bmatrix} \leq n^n$, one gets
		\begin{align*}
		    \mu_{n} 
		    \leq n^{n+1} \left(1+ \sum\limits_{P\in S} \mu_P(U_P) \right)^n < \infty.
		\end{align*}
 
	\end{proof}

\begin{rem}
    We briefly compare this with the results in \cite{MSW, MSTW}. There an alternative definition of expected value (respectively higher moment) was used. Namely, under the same assumptions for $(U_P)_{P\in S}$ as in Definition \ref{definition moment} they define
    \begin{equation} \label{def:I}
        I = \{a \in \mO_S^d \mid a \in U_P \text{ for infinitely many } P \in S\}
    \end{equation}
    and $\mathcal{L}(D)^d_I \coloneqq \mathcal{L}(D)^d \setminus I$. 
    Then they define \emph{the $n$-th moment of the system $(U_P)_{P \in S}$} to be 
	\begin{equation} \label{def:mun}
		\lim\limits_{D \in \mathcal{D}_S} \displaystyle{\frac{\sum\limits_{a \in \mathcal{L}(D)^d_I} \lvert \{ P \in S \mid a \in U_{P} \}\rvert^n} {q^{\ell(D)d}}},
	\end{equation} if it exists. Let us call this the renormalized $n$th moment.

This means that they consider, in our language, the moments of the sets $(U_P \setminus I)_{P\in S}$. The restriction to the complement of $I$ prevents the moment from being infinity in a trivial fashion. The rationale in \cite{MSW, MSTW} is that the moment of a random variable with respect to a probability measure does not change when altered on a null set. Furthermore, for system $(U_P )_{P \in S}$ satisfying \eqref{fund_cond_loc_to_glob}, one can show that the set of elements which lie in infinitely many sets $U_P$ has density zero (see the lemma below). Hence, the renormalized moment should be seen as a renormalized version of the more natural notion in Definition \ref{definition moment}. If $(U_P))_{P\in S}$ satisfies \eqref{densitycond}, $(U_P \setminus I)_{P\in S}$ satisfies all the conditions of Theorem \ref{thm:densityFF} and $I$ is closed in all $\widehat{\mathcal{O}}_P$ for $P\in S$, then the $n$th moment of $(U_P \setminus I)_{P\in S}$ coincides with the regularized $n$th moment. 

\end{rem}

\begin{lemma} \label{lm:I zero density}
	Let $F$ be a global function field with full field of constant equal to $\mathbb{F}_q$, $d$ a positive integer, $S$ be a proper, nonempty subset of places of $F$, $S_t$ as defined in \eqref{def:St} and $\mO_S$ the holomorphy ring of $S$. 
For any $P\in S$, let $U_P\subseteq \widehat{\mO}_P^d$ be a Borel-measurable and $I$ defined as in \eqref{def:I}.
\begin{enumerate}
    \item For all $P\in S$ holds
        \begin{align*}
        \mu_P(U_P\setminus I)=\mu_P(U_P).
    \end{align*}
    \item If $(U_P)_{P\in S}$ satisfies \eqref{densitycond}, then
	\begin{equation*}
		\rho_S(I)=\rho_S(\{a \in \mO_S^d \mid a \in U_P \text{ for infinitely many } P \in S\})=0.
	\end{equation*}
\end{enumerate}
\end{lemma}
\begin{proof}
    For the first part we note that $F$ is a finite extension of $\mathbb{F}_q(x)$ and therefore countable. Thus, $I \subseteq \mathcal{O}_S\subseteq F$ is countable too. Recall that $\mu_P$ is a Haar measure and hence, $\mu_P(I)=0$.
    
	If $a \in I$, then for any integer $t$, $a \in U_P$ for some $P \in S_t$. That is 
	\[
		I \subseteq \{ a \in \mO_S^d \mid a \in U_P \text{ for some } P \in S_t\}
	\]
	for all positive integer $t$. So we have 
	\[
		\overline{\rho}_S(I) \leq \lim_{t \to \infty}\overline{\rho}_S \{ a \in \mO_S^d \mid a \in U_P \text{ for some } P \in S_t\} =0,
	\] with the last equality being the from \eqref{fund_cond_loc_to_glob}. So we have $\rho_S(I)=0$ as desired.

\end{proof}

	\section{Applications}

    In this section, we will verify the assumptions of Theorem \ref{Thm:highermoments}, i.e. compute all higher moments, for various examples that were considered in the existing literature. 
	
	\subsection{Coprime \texorpdfstring{$n$}{n}-tuples}
	In this subsection we will compute all higher moments of coprime $n$-tuples. Here a coprime $n$-tuple denotes an $n$-tuple such that all entries are coprime over a specified ring. The computation of the natural density of coprime pairs over the integers is classical and goes back to Mertens and Césaro in the $1870$'s.
	  
	The density of coprime $n$-tuples over holomorphy rings have been calculated in \cite{bib:HolMS}. We now compute the higher moments over holomorphy rings.

    \begin{theorem} \label{thm:mtuples}
        Let $F$ be a global function field with full field of constant equal to $\mathbb{F}_q$. Let $n \geq 2$ be a positive integer. Let $\emptyset \neq S \subsetneq \mathbb{P}_F$ and let $\mO_S$ be the holomorphy ring of $S$. Define the system $U_P = (P \widehat{\mO}_P)^n \setminus \{0\}$ for each $P \in S$. Then all moments exist and are given by \eqref{mean}, where 
        \begin{equation} \label{eq:coprimesP}
            \mu_P(U_P) = q^{-n\deg(P)}.
        \end{equation}
    \end{theorem}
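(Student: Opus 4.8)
The plan is to verify, for every moment order, all the hypotheses of Theorem~\ref{Thm:highermoments} applied with dimension $d=n$, and then read off \eqref{mean}. The preliminary facts are routine: $P\widehat{\mO}_P$ is a clopen ball in $\widehat{\mO}_P$, so $U_P=(P\widehat{\mO}_P)^n$ is clopen in $\widehat{\mO}_P^n$, hence Borel with $\partial U_P=\emptyset$; and since $\widehat{\mO}_P$ decomposes into $q^{\deg P}$ cosets of its maximal ideal, $\mu_P(P\widehat{\mO}_P)=q^{-\deg P}$, whence $s_P=\mu_P(U_P)=q^{-n\deg P}$, which is \eqref{eq:coprimesP}.

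Next I would check the density hypothesis \eqref{densitycond} (equivalently \eqref{fund_cond_loc_to_glob}). If $a\in U_P$ then in particular $a_1\equiv a_2\equiv 0\pmod P$ — this is where $n\ge 2$ enters — so the set in \eqref{densitycond} is contained in $\{a\in\mO_S^n: a_1\equiv a_2\equiv 0\pmod P\ \text{for some}\ P\in S_t\}$, and Theorem~\ref{condition_verified_polynomials_THEOREM} applied to the coprime polynomials $f=x_1$, $g=x_2$ shows that the upper density of the latter tends to $0$ as $t\to\infty$; monotonicity of $\overline{\rho}_S$ then yields \eqref{densitycond}, so the first hypothesis of Theorem~\ref{Thm:highermoments} holds.

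For the quantitative hypotheses I would take $\alpha=1$, $c'=1$ when $S$ is infinite (if $S$ is finite, take instead $\alpha=0$: then \eqref{newcond} is trivial because $S$ is finite, and $\deg(\sum_jP_j)$ stays bounded so the degenerate case below does not occur for $\deg D$ large). For \eqref{newcond}: if $a\in\mathcal{L}(D)_I^n$ then $a\ne 0$ (since $0$ lies in every $U_P$, hence in $I$), so some coordinate $a_{i_0}\in\mathcal{L}(D)$ is nonzero; its zero divisor and its pole divisor have equal degree, and the pole divisor is $\le D$, so $\deg((a_{i_0})_0)\le\deg D$, which means no place of degree $>\deg D$ divides $a_{i_0}$, hence none has $a\in U_P$; so the set in \eqref{newcond} is empty and $c=1$, $\tilde c=1$ work. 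The crux is \eqref{newcond2}: I would first prove the identity $\bigcap_{j=1}^mU_{P_j}\cap\mathcal{L}(D)^n=\mathcal{L}(D-B)^n$ with $B=P_1+\dots+P_m$, a valuation computation using that $\supp(D)$ is disjoint from $S$, so that $v_{P_j}(D)=0$ and ``$v_{P_j}(a_i)\ge 1$'' is exactly ``$v_{P_j}(a_i)\ge -v_{P_j}(D-B)$''. Thus $|\bigcap_jU_{P_j}\cap\mathcal{L}(D)_I^n|\le q^{n\ell(D-B)}$. If $\deg B\le\deg D$, combining $\ell(D-B)\le\deg(D-B)+1$ with Riemann's inequality $\ell(D)\ge\deg D+1-g$ gives $q^{n\ell(D-B)}\le q^{n\ell(D)}q^{ng}\prod_jq^{-n\deg P_j}\le q^{n\ell(D)}\prod_jv_{P_j}$ with $v_P:=q^{ng-n\deg P}$ (the last step because $m\ge 1$). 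If $\deg B>\deg D$, then $\mathcal{L}(D-B)=0$, so $\bigcap_jU_{P_j}\cap\mathcal{L}(D)^n=\{0\}$, which lies in $I$, hence $\bigcap_jU_{P_j}\cap\mathcal{L}(D)_I^n=\emptyset$ and the bound is trivial. Finally \eqref{newcond3} is $\sum_Pv_P=q^{ng}\sum_Pq^{-n\deg P}<\infty$ because $n\ge 2$ (equivalently, the Euler product of the zeta function of $F$ converges for $\mathrm{Re}(s)>1$). Theorem~\ref{Thm:highermoments} then gives that every moment exists and equals \eqref{mean} with $s_P=q^{-n\deg P}$.

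I expect the main obstacle to be precisely the verification of \eqref{newcond2}: identifying $\bigcap_jU_{P_j}\cap\mathcal{L}(D)^n$ with the Riemann--Roch space $\mathcal{L}(D-B)^n$, absorbing the genus into the auxiliary weights $v_P$ via the harmless constant $q^{ng}$, and — the subtle point — observing that the degenerate range $\deg B>\deg D$ (which cannot be excluded once $\alpha=1$, and $\alpha=1$ is forced by \eqref{newcond} when $S$ is infinite) is tolerated only because the single offending element $0$ has already been removed together with $I$. The remaining steps are bookkeeping.
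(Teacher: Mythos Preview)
Your proposal is correct and follows the same overall strategy as the paper --- verify the hypotheses of Theorem~\ref{Thm:highermoments} for each moment order --- but with a genuinely different choice of parameters in \eqref{newcond}--\eqref{newcond3}. The paper takes $\alpha=1$ and $c'=\tfrac{1}{2n}$ (depending on the moment order $n$), which forces $\deg\bigl(D-\sum_jP_j\bigr)\ge\tfrac12\deg D$; in that range Riemann--Roch gives the \emph{exact} value $\ell\bigl(D-\sum_jP_j\bigr)=\ell(D)-\sum_j\deg P_j$, and one obtains the clean choice $v_P=s_P=q^{-n\deg P}$ with equality in \eqref{newcond2}. You instead fix $c'=1$ independently of the moment order, use only Riemann's \emph{inequality} in both directions, absorb the genus into $v_P=q^{ng}q^{-n\deg P}$, and dispose of the degenerate range $\deg\bigl(\sum_jP_j\bigr)>\deg D$ by the observation that then $\mathcal{L}\bigl(D-\sum_jP_j\bigr)=\{0\}\subset I$. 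Your route is more uniform (one $c'$ serves all moments) and more elementary (no appeal to the exact Riemann--Roch formula), at the cost of the harmless constant $q^{ng}$ and the extra case split; the paper's route produces the sharper identity $v_P=s_P$ and an exact count, which is recorded as \eqref{eq:lowprimeApprox} and reused verbatim in the Eisenstein and unimodular-matrix applications.
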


    \begin{proof}
        We show that the system satisfies the assumptions of Theorem \ref{Thm:highermoments}. We first check that condition (\ref{densitycond}) is satisfied using Theorem \ref{condition_verified_polynomials_THEOREM}. Consider the polynomials $f(x_1,x_2, \dots, x_n) = x_1$ and $g(x_1,x_2, \dots, x_n)= x_2$. Then for positive integers $t$, define 
        \begin{align*}
            S_t(f,g) &= \{ a \in \mO_S^n \mid f(a) \in P \text{ and } g(a) \in P \text{ for some } P \in S_t\} \\
            &= \{ (a_1, a_2, \dots, a_n) \in \mO_S^n \mid a_1 \in P \text{ and } a_2 \in P \text{ for some } P \in S_t \}.
        \end{align*} Note that $A_t = \{a \in \mO_S^d \mid a \in U_P \text{ for some } P \in S_t\}$ is a subset $A_t \subset S_t(f,g)$. By Theorem \ref{condition_verified_polynomials_THEOREM}, we have 
        \begin{align*}
            \lim_{t \to \infty} \overline{\rho}_S(A_t) \leq \lim_{t \to \infty}\overline{\rho}_S(S_t(f,g)) = 0.
        \end{align*} So $\lim_{t \to \infty}\overline{\rho}_S(A_t) =0$.

        Next we check that condition (\ref{newcond}) is satisfied. Let $\alpha = 1$. Fix $a = (a_1,a_2, \dots, a_n) \in \mathcal{L}(D)^n \setminus \{(0, \dots, 0)\}$. As $a \neq (0, \dots,0)$, we can without loss of generality assume that $a_1\neq 0$. Now by \cite[Theorem 1.4.11]{bib:stichtenoth2009algebraic}, we have 
        \[
            \sum_{P \in S}\deg(P)v_P(a_1) = - \sum_{P \in \mathbb{P}_F\setminus S} \deg(P)v_P(a_1).   
        \] Recall that $x \in \mathcal{L}(D)$ implies that $v_P(x) \geq -v_P(D)$. So we have for $D\in \mathcal{D}_S$ 
        \begin{align*}
            \sum_{P \in S}\deg(P)v_P(a_1) &= - \sum_{P \in \mathbb{P}_F\setminus S} \deg(P)v_P(a_1) \\
            &\leq \sum_{P \in \mathbb{P}_F\setminus S} \deg(P)v_P(D) \\
            &\leq \deg(D).
        \end{align*} For a lower bound, we have for any constant $c'>0$
        \[
          \sum_{P \in S, \deg(P) \geq c'\deg(D)} \deg(P)v_P(a_1) \leq \sum_{P \in S}\deg(P)v_P(a_1) \leq \deg(D).
        \] 
        Now since $P\setminus = \{x \in F \mid v_P(x) \geq 1\}$ and $a_1\neq 0$, we have 
        \[
            c' \deg(D) \cdot \vert\{P \in S \ : \ \deg(P)\geq \deg(D), a \in U_P \cap \mathcal{L}(D)^n\}\vert \leq \deg(D),
        \] hence condition (\ref{newcond}) is satisfied for any choice of $c'>0$.

        For condition (\ref{newcond2}) and (\ref{newcond3}), let $P_1,\dots, P_r \in S$ be pairwise distinct places. Then we get for $D\in \mathcal{D}_S$
        \[
            \mathcal{L}(D) \cap \bigcap_{j=1}^r (P_j\mO_{P_j}) = \mathcal{L}(D_{P_1, \dots, P_r}),
        \] where $D_{P_1, \dots, P_r}$ is defined by 
        \[
            v_{\widetilde{P}}(D_{P_1, \dots, P_r}) = \begin{cases}
                -1 & \text{ if } \widetilde{P} \in  \{P_1, \dots, P_r\}, \\
                v_{\widetilde{P}}(D) & \text{ otherwise}.
            \end{cases}
        \] 
        If $\deg(P_1), \dots, \deg(P_r) \leq c' \deg(D)$, then we obtain
        \begin{align*}
            \deg(D_{P_1, \dots, P_r}) = \deg(D) - \sum_{j=1}^r \deg(P_j)
            \geq \deg(D) - r c' \deg(D)
            = (1-rc') \deg(D).
        \end{align*}
        Hence, if we pick $0<c'<\frac{1}{r}$, then we can use the Riemann-Roch theorem \cite[Theorem 1.4.17 (b.)]{bib:stichtenoth2009algebraic} and obtain that there exists a constant $C>0$ depending only on $F$ and $c'$ such that for all $D\in \mathcal{D}_S$ with $\deg(D)\geq C$ holds
        \begin{align*}
            \ell(D_{P_1, \dots, P_r})&= \deg(D_{P_1, \dots, P_r}) +1-g
            = \deg(D) +1-g - \sum_{j=1}^r \deg(P_j) \\
            &= \ell(D) -\sum_{j=1}^r \deg(P_j),
        \end{align*}
        where $g$ is the genus of $F$. Thus, we obtain for $\deg(D)\geq C$
        \begin{align*}
            \vert \mathcal{L}(D) \cap \bigcap_{j=1}^r P_j \vert
            =\vert \mathcal{L}(D_{P_1, \dots, P_r}) \vert 
            = q^{\ell(D)} \prod_{j=1}^r q^{-\deg(P_j)}.
        \end{align*}
        Hence, there exists a constant $C>0$ such that for all $D \in \mathcal{D}_S$ with $\deg(D) \geq C$ and all pairwise distinct places $P_1, \dots, P_r\in S$ with $\deg(P_1), \dots, \deg(P_r) \leq \frac{1}{2r} \deg(D)$ holds
        \begin{equation} \label{eq:lowprimeApprox}
            \vert \mathcal{L}(D) \cap \bigcap_{j=1}^r P_j \vert = q^{\ell(D)} \prod_{j=1}^r q^{-\deg(P_j)}.
        \end{equation}
        Thus, we get
        \begin{align*}
            \vert \mathcal{L}(D)^n \cap \bigcap_{j=1}^r U_{P_j} \vert
            = \vert \mathcal{L}(D) \cap \bigcap_{j=1}^r P_j \vert^n
            = q^{n \ell(D)} \prod_{j=1}^r q^{-n \deg(P_j)}.
        \end{align*}		
        So for each $P \in S$, we choose $v_P = q^{-n\deg(P)}$.
	This sequence satisfies \eqref{newcond2}. The series $\sum_{P \in S} q^{-n\deg(P)}$ is dominated by the Zeta function $Z(q^{-n})=\sum_{P \in \mathbb{P}_F} q^{-n\deg(P)}$. For $n \geq 2$, the Zeta function converges by \cite[Prop. 5.1.6]{bib:stichtenoth2009algebraic}, hence \eqref{newcond3}.
    Therefore, we can invoke Theorem \ref{Thm:highermoments}.
    Note that $(P \widehat{\mO}_P)^n$ is a subgroup of $\widehat{\mO}_P^n$ and singletons are null sets, so \eqref{eq:coprimesP} holds true as
    \[
    \mu_P((P \widehat{\mathcal{O}}_P)^n\setminus \{0\})=\mu_P((P \widehat{\mO}_P)^n) = \vert \widehat{\mO}_P^n / (P \widehat{\mO}_P)^n\vert^{-1} = q^{-n\deg(P)}.
    \]

    \end{proof}
	
\subsection{Affine Eisenstein polynomials}
	
    In this subsection, we will compute all higher moments of affine Eisenstein polynomials. The affine Eisenstein polynomials over holomorphy rings can be used to study totally ramified extension, see \cite{functionfields} and the references therein for more details. In said paper, also the density of affine Eisenstein polynomials is computed (see \cite[Theorem 3.6]{functionfields}). The density of the shifted/affine Eisenstein polynomials over number fields have been computed in \cite{micheli2016densityeis} and the higher moments over number fields have been considered in \cite{MSTW}.
    For this section we will by abuse of notation identify polynomials of degree $d$ with the corresponding $(d+1)$-tuple of coefficients.

    Let $\emptyset \neq S \subsetneq \mathbb{P}_F$ and $P\in S$. A polynomial $f(x) \in \mathcal{O}_S[x]$ of degree $d$, say $f(x) = \sum_{j=0}^d a_j x^j$, is said to be $P$-Eisenstein if
    \begin{align*}
        a_d \notin P, a_0 \notin P^2 \text{ and } a_i \in P \ \forall i \in \{0, \dots, d-1\}.
    \end{align*}
    In addition, $f(x)$ is said to be Eisenstein if there exists $P\in S$ such that $f(x)$ is $P$-Eisenstein. We define for $P\in S$
    \begin{equation} \label{eq:UPEisenstein}
        U_P = \left( P \widehat{\mathcal{O}}_P \setminus P^2 \widehat{\mathcal{O}}_P  \right) \times \left(P \widehat{\mathcal{O}}_P \right)^{d-1} \times \left( \widehat{\mathcal{O}}_P \setminus P \widehat{\mathcal{O}}_P \right).
    \end{equation}
    This will be the system for Eisenstein polynomials in $\mathcal{O}_S[x]$ as $U_P \cap \mathcal{O}_S^{d+1}$ represents exactly the $P$-Eisenstein polynomials.

    Next we introduce shifted Eisenstein polynomials. For $P\in S$ we say $f(x) \in \mathcal{O}_S[x]$ is a shifted $P$-Eisenstein polynomial if there exists $t\in \mathcal{O}_S$ such that $f(x+t)$ is a $P$-Eisenstein polynomial. 

    For $t\in \widehat{\mathcal{O}}_P$ we denote by $\sigma_t$ the map 

    \begin{equation} \label{eq:defsigma}
        \sigma_t : \widehat{\mathcal{O}}_P^{d+1} \rightarrow \widehat{\mathcal{O}}_P^{d+1}, f(x) \mapsto f(x+t).
    \end{equation}
    We define for $P\in S$
    \begin{equation} \label{eq:UPshiftedEisenstein}
        V_P = \bigcup_{t\in \mathcal{O}_S} \sigma_t(U_P).
    \end{equation}
    This yields a system for shifted Eisenstein polynomials as $V_P \cap \mathcal{O}_S^{d+1}$ represents the shifted $P$-Eisenstein polynomials.

    Finally we can define affine Eisenstein polynomials. For a commutative ring $R$ we define for 
    $A=\begin{psmallmatrix} \alpha & \beta \\ \gamma & \delta \end{psmallmatrix} \in \text{GL}_{2\times 2}(R) $ 
    and $f(x) \in R[x]$
    \begin{align*}
        (f*A)(x) = (\gamma x + \delta)^d f\left( \frac{\alpha x+ \beta}{\gamma x + \delta}\right).
    \end{align*}
    Let $P$ be a prime ideal in $R$. We call $f(x)\in R[x]$ affine $P$-Eisenstein in $R[x]$ if there exists $A\in \text{GL}_{2\times 2}(R)$ such that $(f*A)(x) \in R[x]$ is $P$-Eisenstein. Furthermore, a polynomial $f(x) \in \mathcal{O}_S[x]$ is called affine Eisenstein if there exists $P\in S$ such that $(f*A)(x)$ is affine $P$-Eisenstein.

    It turns out that only particular affine transformations are needed to realize all affine Eisenstein polynomials. The following lemma is an consequence of \cite[Cor. 3.4]{functionfields}

    \begin{lemma} \label{lm:affine}
    Let $F$ be a global function field, $\emptyset \neq S \subsetneq \mathbb{P}_F$ and $P\in S$.
    
    \begin{enumerate}
        \item Let $\sigma_t$ denote the shift introduced in \eqref{eq:defsigma} and $U_P$ as in \eqref{eq:UPEisenstein}. Let $s,t \in \mathcal{O}_P$, then the following are equivalent:
        \begin{enumerate}
            \item $\sigma_s(U_P) \cap \sigma_t(U_P) \neq \emptyset$
            \item $\sigma_t(U_P) = \sigma_s(U_P)$
            \item $s-t\in P$.
        \end{enumerate}

        \item Let $f\in \mathcal{O}_S[x]$ a polynomial of degree $d \geq 2$.
        Then the following are equivalent:
    \begin{enumerate}
        \item 
        $f(x)$ is affine $P$-Eisenstein
        \item
        There exists $t\in \mathcal{O}_S$ such that $f(x+t)$ is $P$-Eisenstein or $x^d f(1/x)$ is $P$-Eisenstein.
    \end{enumerate}

    \item Let $f\in \mathcal{O}_S[x]$ a polynomial of degree $d \geq 2$, such that $x^d f(1/x)$ is $P$-Eisenstein, then $f(x)$ is not a shifted Eisenstein polynomial.
    \end{enumerate}
    
    \end{lemma}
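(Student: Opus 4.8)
The plan is to deduce all three parts from \cite[Cor.~3.4]{functionfields}, which (as the text indicates) classifies the affine transformations realizing affine $P$-Eisenstein polynomials. I would begin with part (1), the statement about shifts $\sigma_t$. The key observation is that $\sigma_t(U_P)$ depends only on the class of $t$ in $\widehat{\mathcal{O}}_P/P\widehat{\mathcal{O}}_P$: writing out $f(x+t) = \sum_j a_j(x+t)^j$ and expanding, the conditions defining $U_P$ (namely $a_d\notin P$, $a_0\notin P^2$, and $a_i\in P$ for $i\le d-1$) are congruence conditions mod $P$ and mod $P^2$ on the coefficients, and substituting $t$ by $t+u$ with $u\in P$ changes each coefficient by an element of $P$. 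This gives (c)$\Rightarrow$(b) directly, and (b)$\Rightarrow$(a) is trivial since $U_P\neq\emptyset$. For (a)$\Rightarrow$(c), if $\sigma_s(U_P)\cap\sigma_t(U_P)\neq\emptyset$, then applying $\sigma_{-t}$ we get $\sigma_{s-t}(U_P)\cap U_P\neq\emptyset$, so it suffices to show $\sigma_r(U_P)\cap U_P\neq\emptyset$ forces $r\in P$; this follows because a $P$-Eisenstein $f$ has a unique root in $P\widehat{\mathcal{O}}_P$ modulo $P^2$ up to the Newton-polygon constraint — more elementarily, if both $f(x)$ and $f(x+r)$ are $P$-Eisenstein then reducing mod $P$ gives $\bar f(x) = \bar a_d x^d$ and $\bar f(x+r) = \bar a_d(x+r)^d$ must both be $\equiv \bar a_d x^d$, forcing $r\equiv 0\Mod P$.

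Next I would turn to part (2). The implication (b)$\Rightarrow$(a) is immediate: a shift $\sigma_t$ corresponds to $A = \begin{psmallmatrix}1&t\\0&1\end{psmallmatrix}\in\mathrm{GL}_2(\mathcal{O}_S)$, and the inversion $x^d f(1/x)$ corresponds to $A=\begin{psmallmatrix}0&1\\1&0\end{psmallmatrix}$, so in either case $f*A$ being $P$-Eisenstein makes $f$ affine $P$-Eisenstein by definition. The substantive direction (a)$\Rightarrow$(b) is exactly where \cite[Cor.~3.4]{functionfields} is invoked: that corollary should say that the double coset space of $\mathrm{GL}_2(\mathcal{O}_S)$ acting relevantly on degree-$d$ polynomials, modulo the stabilizer of the $P$-Eisenstein condition, is represented by the shifts together with the single inversion — i.e., that any $A$ realizing affine $P$-Eisensteinness can be reduced, after composing with the shift/scaling symmetries that preserve the $P$-Eisenstein property, to either a pure shift or the inversion. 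I would state this reduction carefully and cite the corollary for the group-theoretic input, filling in only the bookkeeping that translates its statement into the language of \eqref{eq:defsigma} and \eqref{eq:UPEisenstein}.

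Finally, for part (3), suppose $x^d f(1/x)$ is $P$-Eisenstein; I must show $f$ is not shifted Eisenstein, i.e.\ $f(x+t)$ is not $Q$-Eisenstein for any $t\in\mathcal{O}_S$ and any $Q\in S$. First, $x^d f(1/x)$ being $P$-Eisenstein forces $v_P(a_0)=0$ (the leading coefficient of $x^d f(1/x)$ is $a_0$, which must be a $P$-unit) and $v_P(a_d)\ge 1$ with $v_P(a_d)$... more precisely $a_d\in P\setminus P^2$ after the reversal, so in particular $a_0$ is a $P$-unit. On the other hand, if $f(x+t)$ were $Q$-Eisenstein then its constant term $f(t)$ lies in $Q\setminus Q^2$, and — this is the crux — one checks that the reduction of $f$ modulo $P$ is $\bar a_0 \neq 0$ a nonzero constant times... no: $x^d f(1/x)$ being $P$-Eisenstein means $f(x)\equiv a_d x^d \Mod{P}$ with ... let me instead argue via Newton polygons: the $P$-Newton polygon of $f$ has a single slope forcing all $d$ roots of $f$ (in $\overline{F_P}$) to have the same positive valuation, whereas $x^df(1/x)$ being $P$-Eisenstein forces those roots to have a specific negative valuation, a contradiction unless we keep the two primes distinct — and then I show the shifted-Eisenstein condition at $Q\neq P$ combined with the $P$-adic information on $f$ is incompatible with $f\in\mathcal{O}_S[x]$, again via \cite[Cor.~3.4]{functionfields} or a direct valuation count. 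The main obstacle I anticipate is part (3): pinning down precisely why the inversion branch is genuinely disjoint from the shift branch requires a clean Newton-polygon or resultant argument rather than a coefficient-chase, and getting the quantifiers right (for all $Q$, for all $t$) is delicate; I would isolate this as the one place needing real work and handle parts (1) and (2) as largely formal consequences of the cited corollary.
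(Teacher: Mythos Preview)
Your Part~1 is essentially the paper's argument: both reduce to $s=0$ via $\sigma_t^{-1}\sigma_s=\sigma_{s-t}$, and for (c)$\Rightarrow$(b) both use that $\sigma_u(U_P)=U_P$ when $u\in P$. For (a)$\Rightarrow$(c) your route (reduce $f$ mod $P$ to $\bar a_d x^d$ and observe $(x+\bar r)^d=x^d$ forces $\bar r=0$) is slightly different from the paper's, which instead inspects the $x^{d-1}$-coefficient; your version has the advantage of being insensitive to whether $p\mid d$.

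In Part~2 you are missing a genuine step, not just bookkeeping. The corollary you cite (\cite[Cor.~3.4]{functionfields}) produces, in the shift case, a $t\in\mathcal{O}_P$ with $f(x+t)$ $P$-Eisenstein; the lemma requires $t\in\mathcal{O}_S$. The paper closes this gap using Part~1: since $\sigma_t(U_P)$ depends only on $t\bmod P$, and since a set $H_P$ of representatives of $\mathcal{O}_S/(P\cap\mathcal{O}_S)$ and a set $G_P$ of representatives of $\mathcal{O}_P/P$ have the same cardinality $q^{\deg(P)}$, the disjoint unions $\bigsqcup_{t\in H_P}\sigma_t(U_P)\subseteq\bigsqcup_{t\in G_P}\sigma_t(U_P)$ have equally many pieces, hence coincide. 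This equality $\bigcup_{t\in\mathcal{O}_S}\sigma_t(U_P)=\bigcup_{t\in\mathcal{O}_P}\sigma_t(U_P)$ is precisely what lets you replace $t\in\mathcal{O}_P$ by $t\in\mathcal{O}_S$; your sketch does not supply it.

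In Part~3 you have misread the statement and are trying to prove something false. Here ``shifted Eisenstein'' means shifted $P$-Eisenstein for the \emph{same} $P$ appearing in the hypothesis (this is also all that is needed when the lemma is applied to show $W_P=\operatorname{inv}(U_P)\sqcup V_P$). Under the broader reading ``shifted $Q$-Eisenstein for some $Q$'' the claim fails: for distinct primes $P,Q$ the polynomial $f(x)=Px^2+Q$ is itself $Q$-Eisenstein while $x^2f(1/x)=Qx^2+P$ is $P$-Eisenstein. With the intended reading the proof is a single line: if $x^d f(1/x)=\sum_k a_{d-k}x^k$ is $P$-Eisenstein then its constant term $a_d$ lies in $P$, whereas if $f(x+t)$ is $P$-Eisenstein for some $t$ then its leading coefficient, which equals $a_d$, is a $P$-unit. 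No Newton polygons are needed.
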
 \label{lm:affineEisenstein}
    \begin{proof}
        \begin{enumerate}
            \item As $\sigma_t^{-1} \sigma_s =\sigma_{s-t}$, we can without loss of generality assume that $s=0$. Clearly we have $\sigma_t(U_P)=U_P$ for $t \in P$. Hence, $(c) \Rightarrow (b) \Rightarrow (a)$ hold true.
			
			Let us now assume that $f(x) \in \sigma_t(U_P) \cap U_P$. Write $f(x) = \sum_{j=0}^d a_j x^j$. As $f(x) \in U_P$, we get $a_{d-1} \equiv 0 \text{ (mod } P \widehat{\mathcal{O}}_P)$ and $a_d$ is invertible $\text{ (mod } P \widehat{\mathcal{O}}_P)$. However, $f(x) \in \sigma_t(U_P)$ and thus, looking at the constant coefficient of $f(x+t)$, we get that $$a_d t^d \equiv 0 \text{ (mod } P \widehat{\mathcal{O}}_P)$$
			as $a_0, \dots, a_{d-1}\in P$. Hence, we get $t\in (P \widehat{\mathcal{O}}_P) \cap \mathcal{O}_P = P \mathcal{O}_P$.

            \item Clearly we have $(b) \Rightarrow (a)$. For the other direction we recall that by \cite[Cor. 3.4]{functionfields} that for every affine $P$-Eisenstein polynomial $f(x)$ either $x^d f(1/x)$ is $P$-Eisenstein, or there exists $t\in \mathcal{O}_P$ such that $f(x+t)$ is $P$-Eisenstein. We are left to prove that we can choose $t\in \mathcal{O}_S$. Let $G_P$ be a set of representatives in $\mathcal{O}_P$ of $\mathcal{O}_P/P$ and let $H_P$ be a set of representatives in $\mathcal{O}_S$ of $\mathcal{O}_S/(P \cap \mathcal{O}_S)$. By $1. (c)$ we can write
            \begin{align*}
            \bigsqcup_{t\in H_P} \sigma_t(U_P) = \bigcup_{t\in \mathcal{O}_S} \sigma_t(U_P) \subseteq \bigcup_{\widetilde{t}\in \mathcal{O}_P} \sigma_{\widetilde{t}}(U_P) = \bigsqcup_{\widetilde{t}\in G_P} \sigma_{\widetilde{t}}(U_P).
            \end{align*}
            Now using $1.$ and the fact that $\vert H_P \vert = \vert \mathcal{O}_S/(P\cap \mathcal{O}_S) \vert = \vert \mathcal{O}_P /P \vert = \vert G_P \vert$ we get
            \begin{equation} \label{eq:partitionShifts}
                \bigsqcup_{t\in H_P} \sigma_t(U_P) = \bigcup_{t\in \mathcal{O}_S} \sigma_t(U_P) = \bigcup_{\widetilde{t}\in \mathcal{O}_P} \sigma_{\widetilde{t}}(U_P) = \bigsqcup_{\widetilde{t}\in G_P} \sigma_{\widetilde{t}}(U_P),
            \end{equation}
            which yields the claim.
            \item If $f(x)=\sum_{j=0}^d a_j x^j$ is a shifted $P$-Eisenstein polynomial, then $a_d\notin P$. However, if $x^d f(1/x) = \sum_{j=0}^d a_{d-j} x^j$ is $P$-Eisenstein, then $a_d \in P$.
            
        \end{enumerate}
    \end{proof}

    We define for every $P\in S$

    \begin{align*}
        \text{inv}: \widehat{\mathcal{O}}_P^{d+1} \rightarrow \widehat{\mathcal{O}}_P^{d+1}, f(x) \mapsto x^d f(1/x).
    \end{align*}
    For $P\in S$ let $V_P$ be a in \eqref{eq:UPshiftedEisenstein}, then we define
    \begin{equation} \label{eq:UPaffineEisenstein}
        W_P = V_P \cup \text{inv}(U_P).
    \end{equation}  

    This yields a system for affine Eisenstein polynomials as $W_P \cap \mathcal{O}_S^{d+1}$ represents affine $P$-Eisenstein polynomials by Lemma \ref{lm:affine}.
    
    Now we are ready to compute the higher moments of the Eisenstein polynomials, shifted Eisenstein polynomials and affine Eisenstein polynomials.
    \begin{theorem}
        Let $F$ be a global function field with full field of constants given by $\mathbb{F}_q$. Let $d \geq 3$ be a positive integer. Let $\emptyset \neq S \subsetneq \mathbb{P}_F$ and let $\mO_S$ be the holomorphy ring of $S$. Define the system $(U_P)_{P\in S}, (V_P)_{P\in S}$ and $(W_P)_{P\in S}$ as in \eqref{eq:UPEisenstein}, \eqref{eq:UPshiftedEisenstein}, respectively \eqref{eq:UPaffineEisenstein}. Then all moments exist for all three of these systems and are given by \eqref{mean}, where 
        \begin{equation}
            \mu_P(U_P) = \frac{\left(1-q^{-\deg(P)}\right)^2}{q^{d \deg(P)}}
        \end{equation}
        for the system $(U_P)_{P\in S}$,
        \begin{equation}
            \mu_P(V_P) = \frac{\left(1-q^{-\deg(P)}\right)^2}{q^{(d-1) \deg(P)}}
        \end{equation}
        for the system $(V_P)_{P\in S}$ and
        \begin{equation} \label{eq:sPaffine}
            \mu_P(W_P)= \frac{(1-q^{-\deg(P)})^2\left(1 + q^{\deg(P)}\right)}{q^{d\deg(P)}}
        \end{equation}
        for the system $(W_P)_{P\in S}$.
    \end{theorem}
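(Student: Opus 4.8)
The plan is to verify the hypotheses of Theorem~\ref{Thm:highermoments} for each of the three systems $(U_P)_{P\in S}$, $(V_P)_{P\in S}$ and $(W_P)_{P\in S}$, so that the moment formula \eqref{mean} applies, and then to compute the local densities $s_P$ separately. The structure mirrors the proof of Theorem~\ref{thm:mtuples}: condition \eqref{densitycond} for all three systems follows from Theorem~\ref{condition_verified_polynomials_THEOREM} applied to the coprime pair $f = x_0$, $g = x_1$ (the first two coordinates of a $P$-Eisenstein polynomial both lie in $P$, and the shifted/affine variants only enlarge the relevant defining sets by finitely many shifts and the single extra piece $\inv(U_P)$, whose union over $P\in S_t$ is still controlled because being affine $P$-Eisenstein forces $\disc(f)\in P$ or a similar divisibility, which one feeds into Theorem~\ref{condition_verified_polynomials_THEOREM}). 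For \eqref{newcond} one takes $\alpha=1$ and argues as in Theorem~\ref{thm:mtuples}: if $f\in\mathcal L(D)_I^{d+1}$ is affine $P$-Eisenstein for $P$ of large degree, then some fixed nonzero coefficient (or the discriminant, a fixed nonzero element of $\mathcal O_S$ determined by $f$) is divisible by $P$, and the degree identity $\sum_{P\in S}\deg(P)v_P(h) = -\sum_{P\notin S}\deg(P)v_P(h)\le\deg(D)$ for $0\neq h\in\mathcal L(D)$ bounds the number of such $P$ of degree $>c'\deg(D)$ by an absolute constant.

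For \eqref{newcond2} and \eqref{newcond3} the key computation is, for pairwise distinct $P_1,\dots,P_r\in S$ of degree $\le c'\deg(D)$, to bound $\lvert\mathcal L(D)^{d+1}\cap\bigcap_j U_{P_j}\rvert$ (resp.\ with $V_{P_j}$, $W_{P_j}$). As in Theorem~\ref{thm:mtuples}, intersecting $\mathcal L(D)$ with congruence conditions $P_j\widehat{\mathcal O}_{P_j}$ or $P_j^2\widehat{\mathcal O}_{P_j}$ or $\widehat{\mathcal O}_{P_j}\setminus P_j\widehat{\mathcal O}_{P_j}$ yields a union of translates of Riemann--Roch spaces of divisors differing from $D$ by $\le 2\deg(P_j)$ at $P_j$; for $\deg(D)$ large Riemann--Roch makes all dimensions exact, giving an estimate of the form $\le q^{\ell(D)(d+1)}\prod_j v_{P_j}$ with $v_P$ a constant times $\mu_P(U_P)$ (resp.\ $\mu_P(V_P)$, $\mu_P(W_P)$); for $V_P$ one uses that by Lemma~\ref{lm:affine}(1) the union $\bigcup_{t\in\mathcal O_S}\sigma_t(U_P)$ is a \emph{disjoint} union of $\lvert\mathcal O_P/P\rvert$ translates, so $\mu_P(V_P)=q^{\deg(P)}\mu_P(U_P)$, and for $W_P$ that $V_P$ and $\inv(U_P)$ are disjoint by Lemma~\ref{lm:affine}(3), giving $\mu_P(W_P)=\mu_P(V_P)+\mu_P(U_P)$. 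Since each $\mu_P(U_P)$ is $O(q^{-(d-1)\deg(P)})$ with $d\ge 3$, the series $\sum_P v_P$ is dominated by the zeta function $Z(q^{-2})$ and converges by \cite[Prop.~5.1.6]{bib:stichtenoth2009algebraic}, establishing \eqref{newcond3}. (For the systems $V_P$ and $W_P$, one should also invoke Remark~(1) following Theorem~\ref{Thm:highermoments}, or directly reuse the bounds for $U_P$, since $U_P\subseteq V_P\subseteq W_P$ are not nested in the way that remark requires; more honestly one bounds each system's intersections directly.)

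Finally the local densities: $U_P$ is a product, so $\mu_P(U_P) = \mu_P(P\widehat{\mathcal O}_P\setminus P^2\widehat{\mathcal O}_P)\cdot\mu_P(P\widehat{\mathcal O}_P)^{d-1}\cdot\mu_P(\widehat{\mathcal O}_P\setminus P\widehat{\mathcal O}_P) = (q^{-\deg(P)}-q^{-2\deg(P)})\cdot q^{-(d-1)\deg(P)}\cdot(1-q^{-\deg(P)}) = (1-q^{-\deg(P)})^2 q^{-d\deg(P)}$; then $\mu_P(V_P)=q^{\deg(P)}\mu_P(U_P) = (1-q^{-\deg(P)})^2 q^{-(d-1)\deg(P)}$ from the disjointness above; and $\mu_P(\inv(U_P))=\mu_P(U_P)$ since $\inv$ is measure-preserving (it is a linear coordinate permutation), whence $\mu_P(W_P)=\mu_P(V_P)+\mu_P(U_P) = (1-q^{-\deg(P)})^2(q^{-(d-1)\deg(P)}+q^{-d\deg(P)}) = (1-q^{-\deg(P)})^2(1+q^{\deg(P)})q^{-d\deg(P)}$, as in \eqref{eq:sPaffine}. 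The main obstacle I anticipate is the bookkeeping in \eqref{newcond2}/\eqref{newcond3} for $W_P$: one must handle the union $V_P\cup\inv(U_P)$ inside an $r$-fold intersection over distinct primes, expanding it into $2^r$ terms each of which is an intersection of sets of the two basic shapes, and check that the Riemann--Roch dimension count stays exact and that the resulting bound still factors as $\prod_j v_{P_j}$ with a summable $v_P$; the disjointness statements in Lemma~\ref{lm:affine} are exactly what keeps the constants under control, and verifying \eqref{densitycond} for the $\inv(U_P)$ piece (reduction to an Ekedahl-type coprimality statement) is the other place requiring care.
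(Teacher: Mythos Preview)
Your plan matches the paper's proof in overall architecture: verify the hypotheses of Theorem~\ref{Thm:highermoments} for the largest system $(W_P)$ (whence the two smaller ones follow by $U_P\subseteq V_P\subseteq W_P$), and compute the three local measures exactly as you did. Your measure computations for $\mu_P(U_P)$, $\mu_P(V_P)$, $\mu_P(W_P)$ are correct and coincide with the paper's.

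There is one genuine gap. In your verification of \eqref{newcond} (and implicitly of \eqref{densitycond}) for $V_P$ and $W_P$ you rely on the discriminant being ``a fixed nonzero element of $\mathcal O_S$ determined by $f$'' that lies in every $P$ for which $f$ is shifted $P$-Eisenstein. In positive characteristic this fails: if $p\mid d$ one can have $\disc(f)=0$ identically (e.g.\ $f(x)=x^p-a$), and then your divisor-degree count gives nothing. The paper handles this by writing an inseparable $f$ as $f(x)=g(x^{p^k})$ with $g$ separable, observing that $f(x+b)$ $P$-Eisenstein forces $g(x+b^{p^k})$ $P$-Eisenstein, and then applying the discriminant argument to $g$ (whose discriminant is now nonzero) with $g\in\mathcal L(D)^{d/p^k+1}$. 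You should insert this reduction; without it \eqref{newcond} is unproven for $V_P$ and $W_P$.

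A smaller methodological difference: for \eqref{newcond2} you propose to count $\lvert\mathcal L(D)^{d+1}\cap\bigcap_j W_{P_j}\rvert$ by expanding $W_{P_j}$ into its $q^{\deg(P_j)}+1$ disjoint pieces and applying Riemann--Roch/CRT to each piece, aiming at $v_P$ proportional to $\mu_P(W_P)$. The paper instead works more crudely but more explicitly: for each $P_j$ in the ``shifted'' part it writes out the congruences forced on the three subleading coefficients $a_{d-1},a_{d-2},a_{d-3}$ (as functions of $a_d$ and the shift $t_j$), and for each $P_j$ in the $\inv$ part it uses $a_{d-1},a_{d-2}\in P_j$; summing over the $q^{\deg(P_j)}$ choices of $t_j$ and the $2^r$ partitions gives $v_P=2\,q^{-2\deg(P)}$, summable via $Z(q^{-2})$. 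Your route can be made to work and yields a sharper $v_P$, but you must check that after a shift $\sigma_t$ the resulting congruence conditions on the \emph{original} coefficients $(a_0,\dots,a_d)$ are still of bounded depth (mod $P^2$) and hence amenable to the exact Riemann--Roch count; the paper sidesteps this by only ever imposing mod-$P$ conditions on three coefficients. Either approach needs $d\ge3$, which is where that hypothesis is actually used.
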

    \begin{proof}
    First we note that $U_P$ is clopen and that $\inv$ and $\sigma_t$ are homeomorphisms, thus $V_P$ and $W_P$ are clopen too. Hence, the boundary of all those sets are empty. Next, we compute $\mu_P(U_P), \mu_P(V_P)$ and $\mu_P(W_P)$. Clearly we have
    \begin{align*}
        \mu_P(U_P) &= \left(1-\mu_P(P\widehat{\mathcal{O}}_P)\right) \mu_P(P\widehat{\mathcal{O}}_P)^{d-1} \left(\mu_P(P\widehat{\mathcal{O}}_P)-\mu_P(P \widehat{\mathcal{O}}_P)^2 \right) \\
        &= \left(1-q^{-\deg(P)} \right) q^{-(d-1) \deg(P)} \left( q^{-\deg(P)} - q^{-2\deg(P)} \right) \\
        &= \frac{(1-q^{-\deg(P)})^2}{q^{d \deg(P)}}.
    \end{align*}
    Using \eqref{eq:partitionShifts} and the fact that shifts preserve the measure, we obtain
    \begin{align*}
        \mu_P(V_P) = \vert \mathcal{O}_S/(P\cap \mathcal{O}_S) \vert \mu_P(U_P)
        = \frac{\left(1-q^{-\deg(P)}\right)^2}{q^{(d-1) \deg(P)}}.
    \end{align*}
    Using again \eqref{eq:partitionShifts} and Lemma \ref{lm:affineEisenstein} $3.$ we get
    \begin{align*}
        W_P = \text{inv}(U_P) \sqcup \bigsqcup_{t\in H_P} \sigma_t(U_P).
    \end{align*}
    As the shifts and $\inv$ are preserving the measure, we get
    \begin{align*}
        \mu_P(W_P) = (1 + \vert G_P \vert) \mu_P(U_P)
        = (1+ q^{\deg(P)}) \frac{(1-q^{-\deg(P)})^2}{q^{d \deg(P)}}.
    \end{align*}
    We are only showing the claim for the system $(W_P)_{P\in S}$ corresponding to affine Eisenstein polynomials. The other two cases follow as $U_P \subseteq V_P \subseteq W_P$ and hence all the estimates still hold true.

 Let $P \in S$, $D\in \mathcal{D}_S$, and suppose $f \in W_P \cap \mathcal{L}(D)^{d+1}$. 
 
Either $x^d f(1/x) \in U_P \cap \mathcal{L}(D)^{d+1}$ or $f(x) \in V_P\cap \mathcal{L}(D)^{d+1}$. Hence, for any $c'>0$ we have
	\begin{align*}
	    \lvert \{ P \in S & \mid \deg(P) > c'\deg(D), f \in W_P \cap \mathcal{L}(D)^{d+1}\} \rvert \\
	    \leq &\lvert \left\{ P \in S \mid \deg(P) > c'\deg(D), f \in V_P\cap \mathcal{L}(D)^{d+1}\right\} \rvert \\
	    &+ \lvert \{ P \in S \mid \deg(P) > c'\deg(D), x^d f(1/x) \in U_P\cap \mathcal{L}(D)^{d+1}\} \rvert \\
    =:& \  I + II.
	\end{align*} 
    Note that the coefficients of $x^d f(1/x)$ are just a permutation of the coefficients of $f(x)$ and hence, $II \leq I$. Thus, it is enough to bound $I$.

    Let $\disc (f(x))$ denote the discriminant of $f(x)$. We now try to estimate $I$. For this we first consider the case $\disc (f(x)) \neq 0$. Let $b$ be such that $f(x+b) \in U_P$. Since the discriminant is invariant under a shift, $\disc (f(x)) = \disc (f(x+b))$. And so $\disc (f(x)) \in P$. 
    Furthermore, if $f \in \mathcal{L}(D)$, then we have $\disc (f(x)) \in \mathcal{L}(d(d-1)D)$. So we obtain  
	\begin{align*}
	    \vert \{ P \in S \mid  f \in &V_P\cap \mathcal{L}(D)^{d+1}\} \vert \leq  \vert \{ P \in S \mid  \disc (f(x)) \in P\cap \mathcal{L}(d(d-1)D)^{d+1}\} \vert.
	\end{align*} This can be bounded by the same reasoning as for the coprime pairs.
	
	Now suppose $\disc (f(x)) = 0$. Then $f(x)$ is inseparable. so we can write $f(x) = g(x^{p^k})$ for some $k \in \mathbb{N}$, where $g(x)$ is separable. 
    Hence $\disc  (g(x)) \neq 0$. By assumption there exists some $b$ such that $f(x+b)$ is $P$-Eisenstein and thus, we get
	\begin{align*}
	    f(x+b) &= g((x+b)^{p^k})
	    = g(x^{p^k} + b^{p^k})
	\end{align*} is $P$-Eisenstein. Hence, $g(x+b^{p^k})$ is $P$-Eisenstein. This gives us 
        \begin{align*}
	    \vert \{ P \in S \mid  f \in &V_P\cap \mathcal{L}(D)^d\} \vert \leq
	    \vert \{ P \in S \mid  g \in V_P\cap \mathcal{L}(D)^{d/p^k}\} \vert .
	\end{align*}
	Again, using a similar argument as for coprime pairs yields the desired bound. 

        Next we are going to verify conditions \eqref{newcond2}, \eqref{newcond3}. For this we fix some moment $0\neq n\in \mathbb{N}$.
 
    As for coprime pairs we choose $\alpha = 1$ and $c'=\frac{1}{2n}$ and estimate the size of intersections of $\te_{P_1}, \te_{P_2}, \dots, \te_{P_n}$ for 
    distinct $P_1, \dots, P_n$ with $\deg(P_j) \leq \frac{1}{2n}\deg(D)$ for $j \in \{ 1, 2, \ldots, n\}$ for $D\in \mathcal{D}_S$ with $\deg(D)\geq C$ for some constant $C$ depending only on $n,d$.
    Let $f \in \bigcap_{i=1}^n \te_{P_i}$ 
    for each $P_i$, either $f(x+t_i)$ is $P_i$-Eisenstein for some $t_i\in \mathcal{O}_S$, or $x^df(1/x)$ is $P_i$-Eisenstein. If $f(x + t_i)$ is $P_i$-Eisenstein then we have
    \begin{align*}
     a_d(x+t_i)^d + a_{d-1}(x+t_i)^{d-1}+\cdots+a_0 = a_d'x^d+a_{d-1}'x^{d-1}+\cdots+a_0'
    \end{align*} for some $a_d', a_{d-1}', \ldots, a_0' \in \mathcal{O}_S$. 
    Thus, we can express $a_{d-1}', a_{d-2}', a_{d-3}'$ as functions of $a_{d-3}, \dots, a_d$ and $t_i$
    \begin{align*}
        a_{d-1}' &= a_{d-1} + \binom{d}{1}t_i a_d\\
        a_{d-2}' &= a_{d-2} + \binom{d-1}{1}t_i a_{d-1} +    \binom{d}{2}t_i^2 a_d\\
        a_{d-3}' &= a_{d-3} + \binom{d-2}{1}t_ia_{d-2} + \binom{d-1}{2}t_i^2 a_{d-1} + \binom{d}{3}t_i^3 a_d.
    \end{align*}

    As $f(x+t_i)$ is $P_i$-Eisenstein, we get:
    \begin{align*}
       a_{d-1} + \binom{d}{1}t_i a_d &\equiv 0 &&\Mod {P_i}\\
        a_{d-2} + \binom{d-1}{1}t_i a_{d-1} +    \binom{d}{2}t_i^2 a_d &\equiv 0 &&\Mod {P_i}\\
        a_{d-3} + \binom{d-2}{1}t_ia_{d-2} + \binom{d-1}{2}t_i^2 a_{d-1} + \binom{d}{3}t_i^3 a_d &\equiv 0 &&\Mod {P_i}.
    \end{align*}
    Therefore, given $a_d$ and $t_i$, from the first equation, $a_{d-1} \Mod {P_i}$ is fixed, denote it $\overline{a}_{d-1}(a_d, t_i)$. Given the second equation $a_{d-2} \Mod {P_i}$ is also fixed. This is also true for $a_{d-3}$, denote them by $\overline{a}_{d-2}(a_d, t_i)$ 
    and $\overline{a}_{d-3}(a_d, t_i)$ respectively. 
    If $x^df(1/x)$ is $P_i$-Eisenstein then we must have $a_{d-1}, a_{d-1}, a_{d-3} \in P_i$.
    
    We partition $P_1, P_2, \ldots, P_n$ into $Q_1, Q_2, \ldots, Q_{n-k}$ and $S_1, S_2, \dots, S_k$, where $k\in \{0, n\}$ are valid choices. Suppose $x^df(1/x)$ is Eisenstein with respect to $Q_1, Q_2, \ldots, Q_j$ and $f(x)$ is shifted Eisenstein with respect to $S_1, S_2, \dots, S_k$. We count the number of such $f$. The restriction on $x^nf(1/x)$ implies that $a_{d-1}$ and $ a_{d-2}$ both are in $Q_1, Q_2, \ldots, Q_{n-k}$. Then, since $f$ is shifted Eisenstein with respect to each $S_i$, for each $S_i$ there exists some $t_{i}$ such that $f(x+t_{i})$ is $S_i$-Eisenstein. For each $i$, the number of such $t_{i}$ that can be chosen is $|\mO_{S_i}/S_i| = q^{\deg(S_i)}$, since every polynomial Eisenstein with respect to $S_i$ shifted by an element of $S_i$ is again Eisenstein with respect to $S_i$ (see Lemma \ref{lm:affineEisenstein}). 
    This implies that the coefficients of $f$ satisfy the following system of equations 
    \begin{align} \label{eq:lowprimeAffine}
        a_{d-1}&= \overline{a}_{d-1}(a_d, t_{i}) &&\Mod {S_i} \nonumber\\
        a_{d-2}&= \overline{a}_{d-2}(a_d, t_{i}) &&\Mod {S_i} \nonumber
        \\
        a_{d-3}&= \overline{a}_{d-3}(a_d, t_{i}) &&\Mod {S_i}\\
        a_{d-1}&= 0 &&\Mod {Q_m} \nonumber\\
        a_{d-2}&= 0 &&\Mod {Q_m} \nonumber
    \end{align}
    for each $S_i$ for all $i \in \{ 1, \ldots, k\}$ and each $Q_m$ with $m \in \{ 1, \ldots, n-k \}$. Using the the Chinese Remainder Theorem and the fact that $P_i \cap \mathcal{O}_S$ and $P_j\cap \mathcal{O}_S$ are coprime for $P_i \neq P_j$ (by \cite[Prop. 3.2.9.]{bib:stichtenoth2009algebraic}), we see that the coefficients $a_{d-1}, a_{d-2}$ are uniquely determined in $\mathcal{O}_S/(\mathcal{O}_S \cap \bigcap_{i=1}^k S_i \cap \bigcap_{m=1}^{n-k} Q_m)$ and $a_{d-3}$ is uniquely determined in $\mathcal{O}_S/(\mathcal{O}_S \cap \bigcap_{i=1}^k S_i)$, once we have fixed $a_d\in \mathcal{L}(D)$ and $t_i\in \mathcal{O}_S/P_i$. This readily implies that
    \begin{align*}
        &\vert \{ a\in \mathcal{L}(D)^{d+1} \ : \ f(x) \in \bigcap_{i=1}^k V_{P_i}, x^d f(1/x) \in \bigcap_{m=1}^{n-k} U_{Q_m} \} \vert \\
        &\leq \vert \mathcal{L}(D)\vert \cdot q^{\sum_{i=1}^k \deg(S_i)} \cdot \vert \mathcal{L}(D) \cap \bigcap_{i=1}^k S_i \cap \bigcap_{m=1}^{n-k} Q_m \vert^2 \cdot \vert \mathcal{L}(D) \cap \bigcap_{i=1}^k S_i \vert \cdot \vert \mathcal{L}(D)\vert^{d-3} \\
        &=\vert \mathcal{L}(D)\vert^{d+1} \prod_{j=1}^n q^{-2\deg(P_j)} ,
    \end{align*}    
 
    where we used for the for the first inequality that $a_d\in \mathcal{L}(D), a_0, \dots, a_{d-4} \in \mathcal{L}(D) $ and that there are $q^{\deg(S_i)}$ choices for $t_i$.  With \eqref{eq:lowprimeApprox} we can pass to the third line. Summing over all possible partitions, we get
    \begin{align*}
        \vert \mathcal{L}(D)^{d+1} \cap \bigcap_{j=1}^n W_{P_j} \vert 
        \leq 2^n q^{\ell(D) (d+1)} \prod_{j=1}^n q^{-2 \deg(P_j)}.
    \end{align*}

    Since $\sum_{P \in \mathbb{P}_F} q^{-2\deg(P)}$ is dominated by the Zeta function $Z(q^{-2})$, and since $Z(q^{-2})$ converges (see \cite[Prop. 5.1.6.]{bib:stichtenoth2009algebraic}), $\sum_{P \in \mathbb{P}_F} q^{-2\deg(P)}$ converges too.

    \end{proof}

	\subsection{Rectangular unimodular matrices}
 
    In this subsection we compute all higher moments of rectangular unimodular matrices over function fields. Rectangular unimodular matrices have already been considered in the literature in similar situations. Namely, their density over number fields has been calculated in \cite{densitiesunimodular}; their density over function fields has been done in \cite{functionfields}, and their expected value over rationals has been established in \cite{MSW}.

    Let us recall the definition of rectangular unimodular matrices over a Dedekind domain.
    Let $\mathcal{D}$ be a Dedekind domain and $n<m\in \mathbb{N}$. A matrix $M \in \Mat_{n\times m}(\mathcal{D})$ is called rectangular unimodular, if and only if $M \mod P$ has full rank for all non-zero prime ideal $P$ of $\mathcal{D}$ \cite[Proposition 3]{densitiesunimodular}. This is equivalent to saying that the matrix has a basic minor which is not contained in $P\widehat{\mathcal{O}}_P$, where a basic minor of a matrix is the determinant of a square submatrix that is of maximal size.
    Note that rectangular unimodular matrices in the case $n=1$ correspond to coprime pairs.
	We will use \eqref{Thm:highermoments} and ideas of \cite{MSW} to compute all higher moments of rectangular unimodular matrices. 
	
	\begin{theorem}
    Let $n,m$ be positive integers such that $n < m$ and $F$ be a global function field with full field of constants equal to $\mathbb{F}_q$ and $\emptyset \neq S\subsetneq \mathbb{P}_F$. For any $P \in S$, let $V_P$ be the set of matrices in $\text{Mat}_{n\times m}(\widehat{\mathcal{O}}_P)$ for which the ideal generated by the basic minors is contained in $P\hat{\mathcal{O}}_P$ and let $I$ be the set of matrices contained in infinitely many $V_P$. We define $U_P=V_P \setminus I$.

    Then the system $(U_P)_{P\in S}$ satisfies the conditions of Theorem \ref{Thm:highermoments}, and the higher moments are given by \eqref{mean}, where 
     \begin{equation}\label{s_eta}
    	    \mu_P(U_P)=1-\prod\limits_{i=0}^{n-1}\left(1-q^{-deg(P)(m-i)}\right).
     \end{equation}

    \end{theorem}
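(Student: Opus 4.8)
The plan is to verify, one by one, the hypotheses of Theorem~\ref{Thm:highermoments} for the system $(U_P)_{P\in S}$ of rectangular unimodular matrices, reusing the structure already established in the coprime-tuple and Eisenstein cases (which is why the theorem also notes that the case $n=1$ recovers coprime pairs). First I would record the local computation of $s_P = \mu_P(U_P)$: a matrix in $\Mat_{n\times m}(\widehat{\mO}_P)$ fails to be unimodular at $P$ exactly when its reduction mod $P$ does not have full rank $n$ over the residue field $\mathbb{F}_{q^{\deg P}}$, so $1-s_P$ is the proportion of full-rank $n\times m$ matrices over $\mathbb{F}_{q^{\deg P}}$, which is the standard product $\prod_{i=0}^{n-1}\bigl(1-q^{-\deg(P)(m-i)}\bigr)$; this gives \eqref{s_eta}. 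Since $U_P$ is defined by the vanishing mod $P$ of finitely many polynomials (the basic minors) it is clopen in $\widehat{\mO}_P^{nm}$, hence $\mu_P(\partial U_P)=0$.

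Next I would check the density condition \eqref{densitycond}. The set $\{a\in\mO_S^{nm}\mid a\in U_P\text{ for some }P\in S_t\}$ is contained in the set of matrices whose first two maximal minors are simultaneously divisible by some $P\in S_t$; these two minors are coprime polynomials in the entries (e.g.\ the minor on columns $1,\dots,n$ and the minor on columns $2,\dots,n+1$ share no common factor), so Theorem~\ref{condition_verified_polynomials_THEOREM} applies and forces the upper density of this larger set to $0$ as $t\to\infty$. For \eqref{newcond}, set $\alpha=1$ and argue as for coprime pairs: a fixed non-degenerate $a\in\mathcal{L}(D)_I^{nm}$ has some nonzero maximal minor $g(a)$, which lies in $\mathcal{L}(cD)$ for a constant $c=c(n,m)$ coming from the degree of the minor as a polynomial; the number of places $P$ with $P\mid g(a)$ and $\deg P > c'\deg D$ is then bounded by $\deg(cD)/(c'\deg D)$, a constant, so \eqref{newcond} holds.

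The main work is \eqref{newcond2} and \eqref{newcond3}: bounding $\lvert \mathcal{L}(D)_I^{nm}\cap\bigcap_{j=1}^r U_{P_j}\rvert$ for pairwise distinct $P_1,\dots,P_r$ of low degree $\le c'\deg D$. Here I would follow the strategy of \cite{MSW}: for each $P_j$ the condition ``$M\bmod P_j$ has rank $<n$'' can be packaged, via row/column operations over the residue field, into the vanishing of a controlled number of coordinates modulo $P_j$; combining these across the $P_j$ by the Chinese Remainder Theorem (valid since $P_i\cap\mO_S$ and $P_j\cap\mO_S$ are coprime by \cite[Prop.~3.2.9]{bib:stichtenoth2009algebraic}) and applying the Riemann--Roch count \eqref{eq:lowprimeApprox} gives a bound of the shape $\lvert\mathcal{L}(D)\rvert^{nm}\prod_{j=1}^r \beta_{P_j}$ for an explicit $\beta_P$ decaying like $q^{-\deg(P)(m-n+1)}$ (the slowest-decaying contribution, corresponding to the codimension of the non-full-rank locus). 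One then sets $v_P = C\,q^{-\deg(P)(m-n+1)}$ for a suitable constant $C$ absorbing the combinatorial factors from summing over the rank patterns; since $m-n+1\ge 2$, the series $\sum_{P\in S} v_P$ is dominated by the zeta value $Z(q^{-(m-n+1)})$, which converges by \cite[Prop.~5.1.6]{bib:stichtenoth2009algebraic}, giving \eqref{newcond3}. With all hypotheses verified, Theorem~\ref{Thm:highermoments} yields that every moment exists and is given by \eqref{mean} with $s_P$ as in \eqref{s_eta}. The delicate point I expect to require the most care is making the CRT-plus-Riemann--Roch counting of $\bigcap_j U_{P_j}$ genuinely uniform in the $P_j$ and correctly identifying the dominant exponent $m-n+1$, since the non-full-rank locus is not a complete intersection and one must argue through a stratification by rank.
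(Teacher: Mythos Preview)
Your outline is correct and follows the same architecture as the paper's proof: compute $s_P$, then verify \eqref{densitycond}, \eqref{newcond}, \eqref{newcond2}, \eqref{newcond3} in turn. Two steps differ in execution and are worth comparing.

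For $s_P$ and \eqref{densitycond} the paper simply cites \cite[Theorem~4.4]{functionfields} and \cite[Theorem~13]{functionfields}; your self-contained arguments (counting full-rank $n\times m$ matrices over the residue field, and applying Theorem~\ref{condition_verified_polynomials_THEOREM} to two distinct basic minors, which are coprime because each generic determinant is irreducible and they involve different variable sets) are valid alternatives. For \eqref{newcond} the argument is essentially identical: a nonzero basic minor of $a\in\mathcal{L}(D)^{nm}$ lies in $\mathcal{L}(nD)$, and one bounds the number of large places dividing it exactly as in the coprime case.

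For \eqref{newcond2} the paper restricts to a single $n\times n$ submatrix $A$, reduces $\Mat_{n\times n}(\mathcal{L}(D))$ modulo $\bigcap_j P_j$, counts singular matrices in each factor via CRT, and combines with \eqref{eq:lowprimeApprox} to obtain $v_P=C\,q^{-n\deg P}$; since this diverges for $n=1$, the paper defers that case to Theorem~\ref{thm:mtuples}. Your plan of reducing the full $n\times m$ matrix instead yields the codimension exponent $v_P=C\,q^{-(m-n+1)\deg P}$, which converges for all $n<m$ and so treats $n=1$ uniformly. One caution: the phrase ``row/column operations \ldots\ into the vanishing of a controlled number of coordinates'' is a red herring, because such operations depend on both the prime and the matrix and do not combine across the $P_j$. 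What actually works (and is exactly the paper's mechanism, transported from $n\times n$ to $n\times m$) is the direct fibre count
\[
\Bigl|\bigcap_{j}U_{P_j}\cap\mathcal{L}(D)^{nm}\Bigr|
\le |\ker\phi|\cdot\prod_{j}\bigl|\{B\in\Mat_{n\times m}(\mathbb{F}_{q^{\deg P_j}}):\operatorname{rank}B<n\}\bigr|,
\]
where $\phi$ is reduction modulo $\bigcap_j P_j$; then use $|\ker\phi|=|\mathcal{L}(D)\cap\bigcap_j P_j|^{nm}$ together with \eqref{eq:lowprimeApprox}, and bound the second factor by $n\,q^{\deg(P_j)(nm-(m-n+1))}$ via the inequality $1-\prod_{i}(1-x_i)\le\sum_i x_i$. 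No stratification by rank is needed.
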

    \begin{proof}
     We have \eqref{s_eta} due to the computation in the proof of \cite[Theorem 4.4]{functionfields} and the fact that $\mu_P(U_P \cap I)=0$ (by Lemma \ref{lm:I zero density}). We are left to check that the assumptions of Theorem \eqref{Thm:highermoments} are satisfied.
     
     We start by noting that $V_P$ is clopen and $I$ is closed, hence $\mu_P(\partial U_P)\leq \mu_P(I)=0$ by Lemma \ref{lm:I zero density}. Condition \eqref{densitycond} is satisfied as shown in \cite[Theorem 13]{functionfields}. We want to show Condition \eqref{newcond} is satisfied for $\alpha=1$. 
     Let $M\in \Mat_{n\times m}(\mathcal{O}_S)\setminus I$. Fix a submatrix of $M$ corresponding to some basic minor of $M$, and denote it by $A$. When $M\in U_P$, by definition, there exist some $A$ such that $\det(A)\neq 0$. We choose such an $A$. One can see that when $M\in U_P \cap \LD^{n\times m}$, then $A\in \LD^{n\times n}$. Thus, 
     \begin{align*}
         v_P(\det(A))
         &=v_P\left(\sum_{\sigma\in S_n} \text{sgn} (\sigma)\prod^{n}_{i=1}A_{i,\sigma(i)}\right)\\
         &\geq \min_{\substack{\sigma\in S_n \\ \prod^{n}_{i=1}A_{i,\sigma(i)}\neq 0}} v_P\left(\prod^{n}_{i=1}A_{i,\sigma(i)}\right)\\
         &\geq -n v_P(D).
     \end{align*}
     This implies, as for coprime pairs, that for $D\in \mathcal{D}_S$ we have
    \begin{align*}
        \sum_{P \in S}\deg(P)v_P(\det(A)) 
        &= - \sum_{P \in \mathbb{P}_F\setminus S} \deg(P)v_P(\det(A))\\
        &\leq \sum_{P \in \mathbb{P}_F\setminus S} \deg(P) n  v_P(D)\\
        &\leq n\deg(D).
    \end{align*}
    Hence, we can obtain a lower bound, for any constant $c'>0$,
    \begin{equation*}
        \sum_{\substack{P \in S\\ \deg(P)\geq c'\deg(D)}} \deg(P)v_P(\det(A)) \leq \sum_{P \in S}\deg(P)v_P(\det(A)) \leq n\deg(D).
    \end{equation*}
    Since $0\neq \det(A)\in P\widehat{\mathcal{O}}_P$, 
    and $P= \{x \in F \mid v_P(x) \geq 1\}$, we have 
    \begin{equation*}
        c'\deg(D)\vert\{P \in S: \deg(P)> \deg(D), M \in U_P \cap \LD^{n\times m}_{I}\}\vert \leq n\deg(D).
    \end{equation*}
    Hence condition (\ref{newcond}) is satisfied for any $c'>0$. 
    
    Now we check condition (\ref{newcond2}) and (\ref{newcond3}). For each $A\in{\LD}^{n\times n}$ such that $0\neq \det(A)\in\bigcap_{i=1}^{r}P_{i}$, by definition, there exists $M\in \bigcap_{i=1}^{r} U_{P_{i}}\cap {\LD_{I}}^{n\times m}$ containing $A$ as a submatrix. There are less or equal than $\binom{m}{n}\vert \mathcal{L}(D) \vert^{nm-n^2}$ such choices per matrix $M$. So we have
    \begin{equation*}
    	\lvert \bigcap_{i=1}^{r} U_{P_{i}}\cap {\LD_{I}}^{n\times m}\rvert
        \leq \binom{m}{n}\vert \mathcal{L}(D) \vert^{nm-n^2}\lvert \{A\in {\LD}^{n\times n} \mid 0\neq\det(A)\in \bigcap_{i=1}^{r}P_{i}\}\rvert. 
    \end{equation*}
    Fix an arbitrary $D\in\mathcal{D}_S$. Define $\phi_1$ to be the inclusion map $\Mat_{n \times n}(\LD) \rightarrow \Mat_{n \times n}(\OS)$, and $\phi_2$ to be the natural quotient map $\Mat_{n \times n}(\OS) \rightarrow \Mat_{n \times n}(\sfrac{\OS}{\bigcap_{i=1}^{r}P_{i}})$. Let $\phi=\phi_2 \circ \phi_1$. As $\{A\in {\LD}^{n\times n} \mid 0\neq\det(A)\in \bigcap_{i=1}^{r}P_{i}\}$ is a subset of $\Mat_{n \times n}(\LD)$, we get
    \begin{align*}
        &\lvert \{A\in {\LD}^{n\times n} \mid 0\neq\det(A)\in \bigcap_{i=1}^{r}P_{i}\}\rvert\\
        &\leq \lvert \{B\in \Mat_{n \times n}(\sfrac{\OS}{\bigcap_{i=1}^{r}P_{i}}) \mid \det(B)=0\}\rvert \cdot \lvert \ker(\phi) \rvert\\
        &=\lvert \{B\in \Mat_{n \times n}(\sfrac{\OS}{\bigcap_{i=1}^{r}P_{i}}) \mid \det(B)=0\}\rvert \cdot \lvert \LD \cap \bigcap_{i=1}^{r}P_{i} \rvert^{n^2}.
    \end{align*}
    Recall that $P_i \cap \mathcal{O}_S$ and $P_j\cap \mathcal{O}_S$ are distinct maximal ideals in $\mathcal{O}_S$ for $P_i \neq P_j$ (see \cite[Prop. 3.2.9.]{bib:stichtenoth2009algebraic}) and therefore, by the Chinese Remainder Theorem, the following is an isomorphism of $\OS$-modules 
    \begin{align*}
    	\pi: \Mat_{n\times n}(\sfrac{\OS}{\bigcap_{i=1}^{r}P_{i}}) &\rightarrow \prod\limits_{i=1}^{r}\Mat_{n\times n}(\sfrac{\OS}{P_{i}}), \\ \left(a_{jk} + \bigcap_{i=1}^r P_i \right)_{1\leq j,k \leq n} &\mapsto \left( (a_{jk}+P_1)_{1\leq j,k\leq n}, \dots, (a_{jk} + P_r)_{1\leq j,k \leq n} \right).
    \end{align*}
    Clearly we have that $\det(a_{jk}+\bigcap_{i=1}^{r}P_{i})=0 \mod \bigcap_{i=1}^{r}P_{i}$ if and only if $\det(a_{jk}+P_{i})=0 \mod P_{i}$ for all $P_i$. Therefore, 
    \begin{equation*}
        \lvert \{B\in \Mat_{n \times n}(\sfrac{\OS}{\bigcap_{i=1}^{r}P_{i}}) \mid \det(B)=0\}\rvert
        =\prod_{i=1}^{r} \lvert \{B_i\in \Mat_{n \times n}(\sfrac{\OS}{P_{i}}) \mid \det(B_i)=0\}\rvert.
    \end{equation*}
    We know that the quotient ring $\sfrac{\OS}{P_i}$ is isomorphic to $\mathbb{F}_{q^{\deg(P_i)}}$ (see \cite[Prop. 3.2.9.]{bib:stichtenoth2009algebraic}). So we have 
    \begin{align*}
    &\lvert\{B_i\in \Mat_{n \times n}(\sfrac{\OS}{P_{i}}) \mid \det(B_i)=0\}\rvert\\
    &=\lvert\Mat_{n \times n}(\mathbb{F}_{q^{\deg(P_i)}})\setminus \text{GL}_{n \times n}(\mathbb{F}_{q^{\deg(P_i)}})\rvert\\
    &=q^{deg(P_i)n^{2}}-\prod_{k=0}^{n-1}(q^{\deg(P_i)n}-q^{\deg(P_i)k})\\
    &\leq 2^n q^{\deg(P_i)n(n-1)}.
    \end{align*}
    By \eqref{eq:lowprimeApprox}, there exist a constant $C>0$ such that for all $D\in \mathcal{D}_S$ with $\deg(D)\geq C$, it holds that
    \begin{equation*}
       \lvert \LD \cap \bigcap_{i=1}^{r}P_{i} \rvert
       = q^{\ell(D)} \prod_{j=1}^r q^{-\deg(P_j)}.
    \end{equation*}
    Now, combine everything, we have 
    \begin{align*}
    &\lvert \bigcap_{i=1}^{r} U_{P_{i}}\cap \mathcal{L}(D)^{n\times m}\rvert\\
    &\leq \binom{m}{n} q^{(nm-n^2)\deg(D)}
    q^{\ell(D)n^2} \prod_{j=1}^r q^{-n^2\deg(P_j)}
    \prod_{i=1}^{r}\big(2^n q^{\deg(P_i)n(n-1)}\big)\\
    &= C' q^{nm \ell(D)} \prod_{j=1}^r q^{-n\deg(P)}
    \end{align*}
    for a constant $C'>0$ depending only on $n,m$. Observe that $\sum\limits_{P\in\mathbb{P}_F}q^{-n\deg(P)}$ is the Zeta function $Z(q^{-n})$. By \cite[Prop. 5.1.6.]{bib:stichtenoth2009algebraic}, it converges when $n>1$. The case $n=1$ corresponds to coprime $m$-tuples and is covered by Theorem \ref{thm:mtuples}.
    \end{proof}
	
	\section*{Acknowledgments}
	
	We would like to thank the referee for their helpful comments. Andy Hsiao and Severin Schraven gratefully acknowledge support by NSERC of Canada.
	
	\bibliographystyle{plain}
\bibliography{biblio}
\end{document}